\documentclass[reqno,10pt,centertags]{amsart} 
\usepackage{amsmath,amsthm,amscd,amssymb,latexsym,esint,upref,stmaryrd,
enumerate,color,verbatim,yfonts}
%\usepackage{color}
%\date{\today}
%%%%%%%%%%%%%%%%%%%%%%%%%%%%%%%%%%%%%
%HERE you TURN ON/OFF the tags for eqs., refs., etc.%
%\usepackage[nomsgs,ignoreunlbld]{refcheck}
%\usepackage{showkeys}
\usepackage{hyperref} 
\newcommand*{\mailto}[1]{\href{mailto:#1}{\nolinkurl{#1}}}
\newcommand{\arxiv}[1]{\href{http://arxiv.org/abs/#1}{arXiv:#1}}

%\documentclass[draft, reqno]{amsart}
%\usepackage{amssymb}
%\usepackage{amscd}
%\usepackage{amsmath}
%\usepackage{amsthm}
%\usepackage{verbatim}

%%%%% BOLD AND CALLIGRAPHIC LETTERS %%%%%%

\newcommand{\R}{{\mathbb R}}
\newcommand{\N}{{\mathbb N}}

\newcommand{\bbC}{{\mathbb{C}}}

\newcommand{\bbN}{{\mathbb{N}}}

\newcommand{\bbR}{{\mathbb{R}}}

\newcommand{\cH}{{\mathcal H}}

\newcommand{\cX}{{\mathcal X}}

\newcommand{\beq}{\begin{equation}}
\newcommand{\enq}{\end{equation}}

%%%%%%%%%%%%% GREEK LETTERS%%%%%%%%%%%

\renewcommand{\a}{\alpha}
\renewcommand{\b}{\beta}
\newcommand{\g}{\gamma}

\renewcommand{\l}{\lambda}

%\newcommand{\f}{\varphi}

%%%%%%%%%% OPERATOR NAMES AND SUCH %%%%%%%%%%%%

\DeclareMathOperator{\supp}{supp}

\DeclareMathOperator{\ran}{ran}
\DeclareMathOperator{\dom}{dom}

\renewcommand{\ln}{\text{\rm ln}}

\newcommand{\no}{\notag}
\newcommand{\lb}{\label}
\newcommand{\f}{\frac}

\newcommand{\ol}{\overline}
\newcommand{\bs}{\backslash}

\newcommand{\wti}{\widetilde}
\newcommand{\Oh}{O}
\newcommand{\oh}{o}
\newcommand{\hatt}{\widehat} 
\newcommand{\dott}{\,\cdot\,}
\newcommand{\linspan}{\operatorname{lin.span}}

\renewcommand{\dot}{\overset{\textbf{\Large.}}}

\renewcommand{\dotplus}{\overset{\textbf{\Large.}} +}

\newcommand{\bi}{\bibitem}

\renewcommand{\le}{\leqslant}

\let\geq\geqslant
\let\leq\leqslant

 % replaces \cH(\bbR^n)
 % replaces \cH^1(\bbR^n)

\newcommand{\lam}{\lambda}

\newcommand{\ga}{\gamma}

\newcommand{\de}{\delta}

\newcommand{\Lr}{{L^2((a,b);rdx)}} 

\newcommand{\ACl}{{AC_{loc}((a,b))}}
\newcommand{\Ll}{{L^1_{loc}((a,b);dx)}}

\makeatletter
\def\theequation{\@arabic\c@equation}
%\newcommand{\erpm}[1]{{$(\ref{#1}\pm)$}} 
%\newcommand{\erp}[1]{{$(\ref{#1}+)$}}
%\newcommand{\erm}[1]{{$(\ref{#1}-)$}}
%\renewcommand{\theequation}{\thesection.\arabic{equation}}
%\renewcommand{\P}{{\mathcal P}}

%%%%%%%%%%%% THEOREM ENVIRONMENTS %%%%%%%%%%%%

\allowdisplaybreaks 
\numberwithin{equation}{section}

\newtheorem{theorem}{Theorem}[section]

\newtheorem{lemma}[theorem]{Lemma}

\newtheorem{definition}[theorem]{Definition}
\newtheorem{hypothesis}[theorem]{Hypothesis}
\newtheorem{example}[theorem]{Example}

\theoremstyle{remark}
\newtheorem{remark}[theorem]{Remark}

%%%%%%%%%%%%%% END OF DEFINITIONS %%%%%%%%%%

\begin{document}

\title[The Krein--von Neumann Extension Revisited]{The Krein--von Neumann Extension Revisited} 

\author[G.\ Fucci]{Guglielmo Fucci}
\address{Department of Mathematics, 
East Carolina University, 331 Austin Building, East Fifth Street,
Greenville, NC 27858-4353, USA}
%\email{\mailto{fuccig@ecu.edu}}
\email{fuccig@ecu.edu}
%\urladdr{\url{http://myweb.ecu.edu/fuccig/}}
\urladdr{http://myweb.ecu.edu/fuccig/}

\author[F.\ Gesztesy]{Fritz Gesztesy}
\address{Department of Mathematics, 
Baylor University, Sid Richardson Bldg., 1410 S.\,4th Street, Waco, TX 76706, USA}
%\email{\mailto{Fritz\_Gesztesy@baylor.edu}}
\email{Fritz$\_$Gesztesy@baylor.edu}
%\urladdr{\url{http://www.baylor.edu/math/index.php?id=935340}}
\urladdr{http://www.baylor.edu/math/index.php?id=935340}

\author[K.\ Kirsten]{Klaus Kirsten}
\address{Department of Mathematics, 
Baylor University, Sid Richardson Bldg., 1410 S.\,4th Street, Waco, TX 76706, USA, and Mathematical
Reviews, American Mathematical Society, 416 4th Street, Ann Arbor, MI 48103, USA}
%\email{\mailto{Klaus\_Kirsten@baylor.edu}}
\email{Klaus$\_$Kirsten@baylor.edu}
%\urladdr{\url{http://www.baylor.edu/math/index.php?id=54012}}
\urladdr{http://www.baylor.edu/math/index.php?id=54012}

\author[L.\ Littlejohn]{Lance L. Littlejohn}
\address{Department of Mathematics, 
Baylor University, Sid Richardson Bldg., 1410 S.\,4th Street, Waco, TX 76706, USA}
%\email{\mailto{Lance\_Littlejohn@baylor.edu}}
\email{Lance$\_$Littlejohn@baylor.edu}
%\urladdr{\url{http://www.baylor.edu/math/index.php?id=53980}}
\urladdr{http://www.baylor.edu/math/index.php?id=53980}

\author[R.\ Nichols]{Roger Nichols}
\address{Department of Mathematics (Dept.~6956), The University of Tennessee at Chattanooga, 
615 McCallie Ave, Chattanooga, TN 37403, USA}
%\email{\mailto{Roger-Nichols@utc.edu}}
\email{Roger-Nichols@utc.edu}
%\urladdr{\url{http://www.utc.edu/faculty/roger-nichols/index.php}}
\urladdr{http://www.utc.edu/faculty/roger-nichols/index.php}

\author[J.\ Stanfill]{Jonathan Stanfill}
\address{Department of Mathematics, 
Baylor University, Sid Richardson Bldg., 1410 S.\,4th Street, Waco, TX 76706, USA}
%\email{\mailto{Jonathan\_Stanfill@baylor.edu}}
\email{Jonathan$\_$Stanfill@baylor.edu}
%\urladdr{\url{http://sites.baylor.edu/jonathan-stanfill/}}
\urladdr{http://sites.baylor.edu/jonathan-stanfill/}

%\dedicatory{}

\date{\today}
\thanks{Applicable Anal. 2021, 25p., DOI: 10.1080/00036811.2021.1938005.} 
%\thanks{Appeared in {\it .}
\@namedef{subjclassname@2020}{\textup{2020} Mathematics Subject Classification}
\subjclass[2020]{Primary: 34B09, 34B24, 34C10, 34L40; Secondary: 34B20, 34B30.}
\keywords{Krein--von Neumann extension, Singular Sturm--Liouville operators, Bessel and Jacobi-type differential 
operators.}

%%%%%%%%%%%%%%%%%%%%%%%%%%%%%%%
\begin{abstract} 
We revisit the Krein--von Neumann extension in the case where the underlying symmetric operator is strictly positive 
and apply this to derive the explicit form of the Krein--von Neumann extension for singular, general (i.e., three-coefficient) 
Sturm--Liouville operators on arbitrary intervals. In particular, the boundary conditions for the Krein--von Neumann extension of the strictly positive minimal Sturm--Liouville operator are explicitly expressed in terms of generalized boundary values adapted to the (possible) singularity structure of the coefficients near an interval endpoint. 
\end{abstract}
%%%%%%%%%%%%%%%%%%%%%%%%%%%%%%%

\maketitle

%\newpage 

{\scriptsize{\tableofcontents}}
%\normalsize

%%%%%%%%%%%%%%%%%%%%%%%%%%%%%%%
%%%%%%%%%%%%%%%%%%%%%%%%%%%%%%%
\section{Introduction} \lb{s1} 
%%%%%%%%%%%%%%%%%%%%%%%%%%%%%%%
%%%%%%%%%%%%%%%%%%%%%%%%%%%%%%%

While the principal objective of this paper is to derive the explicit form of the Krein--von Neumann extension for 
singular (three-coefficient) Sturm--Liouville operators on arbitrary intervals with strictly positive underlying minimal operator, we briefly pause and first describe the abstract Krein--von Neumann extension of a nonnegative symmetric operator in complex, separable Hilbert space in a nutshell. 

A linear operator $S:\dom(S)\subseteq\cH\to \cH$ in some complex, separable Hilbert space $\cH$ is called 
{\it nonnegative} if 
\begin{equation}\label{Pos-1}
(u,Su)_\cH\geq 0, \quad u\in \dom(S) 
\end{equation}
(in this case $S$ is symmetric). In addition,  $S$ is called {\it strictly positive}, if for some 
$\varepsilon >0$, $(u,Su)_\cH\geq \varepsilon \|u\|_{\cH}^2$, $u\in \dom(S)$. 
Next, we recall the order relation $0 \leq A \leq B$ for two nonnegative self-adjoint operators in $\cH$ in the form 
(see, e.g., \cite[Section\ I.6]{Fa75}, \cite[Theorem\ VI.2.21]{Ka80})  
\begin{equation}
0 \leq A\leq B  \, \text{ if and only if } \, (B + a I_\cH)^{-1} \leq (A + a I_\cH)^{-1} 
\, \text{ for all $a>0$.}     \label{PPa-1} 
\end{equation} 

In the following $0 \leq S$ is a linear, unbounded, densely defined, nonnegative operator in $\cH$, and we assume that $S$ has nonzero deficiency indices.  In particular,
\begin{equation}
{\rm def} (S) = \dim (\ker(S^*-z I_{\cH})) \in \bbN\cup\{\infty\}, 
\quad z\in \bbC\backslash [0,\infty), 
\lb{DEF}
\end{equation}
is well-known to be independent of $z$ (and if $S \geq \varepsilon I_{\cH}$ for some $\varepsilon > 0$ then the independence of ${\rm def} (S)$ of $z$ extends to $z\in \bbC\backslash [\varepsilon,\infty)$). 
Moreover, since $S$ and its closure $\ol{S}$ have the same self-adjoint extensions in $\cH$, we will without loss of generality assume that $S$ is closed in $\cH$.

The following is a fundamental result that cements the extraordinary role played by the Friedrichs and Krein--von Neumann extensions of $S$, to be found in M.\ Krein's celebrated 1947 paper\footnote{See also Theorems\ 2 and 5--7 in the English summary on page 492.} \cite{Kr47} : 
 
%%%%%%%%%%%%%%%%%%%%%%%%%%%%%%%%%%%%%%%%%%%%
\begin{theorem} \lb{t1.1}
Assume that $S$ is a densely defined, closed, nonnegative operator in $\cH$. Then, among all 
nonnegative self-adjoint extensions of $S$, there exist two distinguished ones, $S_K$ and $S_F$, which are, respectively, the smallest and largest
$($in the sense of order between nonnegative self-adjoint operators\,$)$ such extensions. Furthermore, a nonnegative self-adjoint operator $\wti S$ is a self-adjoint extension of $S$ if and only if $\wti S$ satisfies 
\begin{equation}\label{Fr-Sa}
S_K\leq \wti S \leq S_F.
\end{equation}
In particular, \eqref{Fr-Sa} determines $S_K$ and $S_F$ uniquely. \\[1mm] 
In addition,  if $S\geq \varepsilon I_{\cH}$ for some $\varepsilon >0$, one has 
$S_F \geq \varepsilon I_{\cH}$, and 
\begin{align}
\dom (S_F) &= \dom (S) \dotplus (S_F)^{-1} \ker (S^*),     \lb{SF}  \\
\dom (S_K) & = \dom (S) \dotplus \ker (S^*),    \lb{SK}   \\
\dom (S^*) & = \dom (S) \dotplus (S_F)^{-1} \ker (S^*) \dotplus \ker (S^*)  \no \\
& = \dom (S_F) \dotplus \ker (S^*),    \lb{S*} 
\end{align}
in particular, 
\begin{equation} \label{Fr-4Tf}
\ker(S_K)= \ker\big((S_K)^{1/2}\big)= \ker(S^*) = \ran(S)^{\bot}.
\end{equation} 
\end{theorem}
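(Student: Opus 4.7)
The plan is to construct the two distinguished extensions separately and then establish the order inequality \eqref{Fr-Sa} on each side. First I would build $S_F$ by the Friedrichs procedure: introduce the nonnegative sesquilinear form $\mathfrak{s}(u,v) = (u,Sv)_{\cH}$ on $\dom(S)$, note it is closable since $S$ is closed and symmetric, and take $\overline{\mathfrak{s}}$ to be its closure. The first representation theorem produces a unique nonnegative self-adjoint operator $S_F$ representing $\overline{\mathfrak{s}}$, with $S \subseteq S_F$, and the coercivity bound $\mathfrak{s}(u,u) \ge \varepsilon \|u\|_{\cH}^{2}$ propagates to $\overline{\mathfrak{s}}$, so that $S_F \ge \varepsilon I_{\cH}$ whenever $S \ge \varepsilon I_{\cH}$. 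Maximality of $S_F$ is a standard form argument: any nonnegative self-adjoint extension $\wti S$ has closed form $\wti{\mathfrak{s}} \supseteq \mathfrak{s}$, whence $\overline{\mathfrak{s}} \subseteq \wti{\mathfrak{s}}$ (closure being the smallest closed extension); the form version of \eqref{PPa-1} translates this inclusion into $\wti S \le S_F$.

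For the Krein--von Neumann extension in the strictly positive setting, I would take \eqref{SK} as the definition and verify the required properties. Strict positivity forces $\ran(S)$ to be closed and yields $\cH = \ran(S) \oplus \ker(S^*)$, which makes $\dom(S) + \ker(S^*)$ algebraically direct (any common element would lie in $\ker(S) = \{0\}$); setting $S_K(f + h) := Sf$ for $f \in \dom(S)$, $h \in \ker(S^*)$ then gives a well-defined symmetric operator, with symmetry following from $Sf \perp \ker(S^*)$. The crucial self-adjointness check runs as follows: if $g \in \dom(S_K^*)$ with $S_K^* g = g^*$, pairing against $\dom(S)$ places $g \in \dom(S^*)$ with $S^* g = g^*$, while pairing against $\ker(S^*)$ forces $g^* \in \ker(S^*)^{\perp} = \ran(S)$; writing $g^* = Sk$ with $k \in \dom(S)$, the element $g - k$ lies in $\ker(S^*)$, placing $g \in \dom(S_K)$. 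Because $S$ is injective, the definition immediately yields $\ker(S_K) = \ker(S^*)$, which combined with $\ker(A) = \ker(A^{1/2})$ for nonnegative self-adjoint $A$ and $\ker(S^*) = \ran(S)^{\perp}$ proves \eqref{Fr-4Tf}. For \eqref{SF}, given $f \in \dom(S_F)$, decompose $S_F f = Sg + h$ along $\ran(S) \oplus \ker(S^*)$; then $f - g - (S_F)^{-1} h \in \ker(S_F) = \{0\}$, so $f = g + (S_F)^{-1} h$, and directness is checked just as for $\dom(S_K)$. The identity \eqref{S*} then follows by combining \eqref{SF} with the standard von Neumann-type decomposition $\dom(S^*) = \dom(S_F) \dotplus \ker(S^*)$, which holds because $0 \in \rho(S_F)$.

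The main obstacle is the minimality of $S_K$, namely $S_K \le \wti S$ for every nonnegative self-adjoint extension $\wti S$. I would handle this via resolvents: introduce the bounded nonnegative self-adjoint operator $R_0 \in \cB(\cH)$ equal to $S^{-1}$ on $\ran(S)$ and zero on $\ker(S^*)$, and verify that $(S_K + aI_{\cH})^{-1}$ converges strongly to $R_0$ as $a \downarrow 0^{+}$. For any other nonnegative self-adjoint extension $\wti S$, the task is to prove $(\wti S + aI_{\cH})^{-1} \le (S_K + aI_{\cH})^{-1}$ for every $a > 0$ by comparing the two bounded positive operators on the decomposition $\cH = \ran(S) \oplus \ker(S^*)$ and exploiting that both resolvents act the same way up to controlled remainders on $\ran(S)$, while on $\ker(S^*)$ the Krein resolvent is extremal. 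Invoking \eqref{PPa-1} finally converts this resolvent inequality into $S_K \le \wti S$, thereby finishing \eqref{Fr-Sa} and yielding uniqueness of $S_K$ and $S_F$.
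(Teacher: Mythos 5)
Your proposal gets the routine parts right: the Friedrichs construction of $S_F$, the deduction $\wti S\le S_F$ from the inclusion of closed forms, the coercivity $S_F\ge\varepsilon I_{\cH}$, the von Neumann-type verification that the operator defined by \eqref{SK} is self-adjoint, and the decompositions \eqref{SF}--\eqref{S*} together with \eqref{Fr-4Tf}. For comparison, the paper does not prove Theorem \ref{t1.1} at all --- it quotes it from Krein \cite{Kr47} --- and in Section \ref{s3} it simply takes \eqref{SK} as the working description of $S_K$ in the strictly positive case, exactly as you do; so the real question is whether your sketch closes the parts the theorem actually asserts.

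It does not, in three places. First, the theorem claims a smallest nonnegative self-adjoint extension for \emph{every} densely defined, closed, nonnegative $S$, whereas your construction of $S_K$ requires $S\ge\varepsilon I_{\cH}$; for merely nonnegative $S$ the formula \eqref{SK} fails (in the paper's Example \ref{e3.4} one has $\ker(T^{(0)}_{max})=\{0\}$, so $\dom(S)\dotplus\ker(S^*)=\dom(S)$ is not self-adjoint, yet the Krein extension exists and is the Neumann realization). Second, and most seriously, the minimality $S_K\le\wti S$ --- the crux of the whole theorem --- is only a plan, and the plan rests on false premises: two self-adjoint extensions of $S$ have resolvents at $-a$ that agree on $\ran(S+aI_{\cH})$, not on $\ran(S)$, and neither $(\wti S+aI_{\cH})^{-1}$ nor $(S_K+aI_{\cH})^{-1}$ leaves the decomposition $\cH=\ran(S)\oplus\ker(S^*)$ invariant; moreover $(S_K+aI_{\cH})^{-1}$ does \emph{not} converge strongly to your pseudo-inverse $R_0$ as $a\downarrow 0$, since it acts as $a^{-1}I$ on $\ker(S_K)=\ker(S^*)$ and therefore diverges there. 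A genuine additional idea is required at this point, e.g.\ Krein's fractional linear transform reducing the problem to self-adjoint contractive extensions of a nondensely defined symmetric contraction, or the Ando--Nishio identification of the form of $S_K$ as $\mathfrak{t}_{S_K}[u]=\sup_{0\neq v\in\dom(S)}|(u,Sv)_{\cH}|^2/(v,Sv)_{\cH}$, after which $S_K\le\wti S$ follows from the Cauchy--Schwarz inequality $|(u,Sv)_{\cH}|^2=|\mathfrak{t}_{\wti S}(u,v)|^2\le\mathfrak{t}_{\wti S}[u]\,(v,Sv)_{\cH}$. Third, the converse half of \eqref{Fr-Sa} --- that any nonnegative self-adjoint $\wti S$ satisfying $S_K\le\wti S\le S_F$ is automatically an \emph{extension} of $S$ --- is not addressed anywhere in your argument, and it too requires a separate (form-theoretic) proof.
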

%%%%%%%%%%%%%%%%%%%%%%%%%%%%%%%%%%%%% 

Here the operator inequalities in \eqref{Fr-Sa} are understood in the resolvent sense, 
\begin{equation}
(S_F + a I_{\cH})^{-1} \le \big(\wti S + a I_{\cH}\big)^{-1} \le (S_K + a I_{\cH})^{-1} 
\, \text{ for some (and hence for all\,) $a > 0$}    \lb{Res}
\end{equation}
(an alternative approach employs quadratic forms).

Thus, $S_K$ and $S_F$ are distinguished self-adjoint extensions of $S$, representing, in particular, extremal 
points of all nonnegative self-adjoint extensions $\wti S \geq 0$ of $S$.

We will call the operator $S_K$ the {\it Krein--von Neumann extension}
of $S$. See \cite{Kr47} and also the discussion in \cite{AS80}, \cite{AT05}. It should be
noted that the Krein--von Neumann extension was first considered by von Neumann 
\cite{vN29} in 1929 in the case where $S$ is strictly positive\footnote{His construction appears in the proof of 
Theorem 42 on pages 102--103.}, that is, if 
$S \geq \varepsilon I_{\cH}$ for some $\varepsilon >0$. However, von Neumann did not isolate the extremal property of this extension as described in \eqref{Fr-Sa} and \eqref{Res}. M.\ Krein \cite{Kr47}, \cite{Kr47a} was the first to systematically 
treat the general case $S\geq 0$ and to study all nonnegative self-adjoint extensions of $S$, illustrating the special role of the {\it Friedrichs extension} (i.e., the ``hard'' extension) $S_F$ of $S$ and the Krein--von Neumann (i.e., the ``soft'') extension $S_K$ of $S$ as extremal cases when considering all nonnegative extensions of $S$. For more results on the 
Krein--von Neumann extension of a strictly positive symmetric operator $S \geq \varepsilon I_{\cH}$ we refer to the 
beginning of Section \ref{s3}. 

However, the principal aim of this paper are (three-coefficient) generally singular Sturm--Liouville differential expressions 
of the type 
\begin{equation}
\tau=\f{1}{r(x)}\left[-\f{d}{dx}p(x)\f{d}{dx} + q(x)\right] \, \text{ for a.e.~$x\in(a,b) \subseteq \bbR$,} 
   \lb{1.1}
\end{equation} 
on a general interval $(a,b) \subseteq \bbR$ and their various $L^2((a,b); rdx)$-realizations, with the coefficients 
$p, q, r$ satisfying Hypothesis \ref{h2.1}. In particular, the 
minimal operator $T_{min}$ associated with $\tau$ (cf.\ \eqref{2.5}), assumed in addition to be strictly positive, plays 
the role of $S$ above, and the corresponding maximal operator $T_{max}$ (cf.\ \eqref{2.2}) represents $S^*$. The 
explicit forms of the Friedrichs and Krein extensions of $T_{min}$ are then of the form \eqref{3.23} and \eqref{3.24}, \eqref{3.25}, \eqref{3.27}, \eqref{3.28}, respectively. Moreover, the corresponding boundary conditions are explicitly expressed in terms of generalized boundary values adapted to the (possible) singularity structure of the coefficients near an interval endpoint with the help of principal and nonprincipal solutions of the underlying Sturm--Liouville equation. 

Briefly turning to a sketch of the content of each section, we note that Section \ref{s2} focuses on the basics of 
Sturm--Liouville operators in $L^2((a,b); rdx)$ and the underlying Weyl--Titchmarsh--Kodaira theory, including self-adjoint 
extensions and generalized boundary values (and conditions) in the singular case. Section \ref{s3} then contains the bulk of the new material in this paper. After continuing a discussion of the abstract Krein--von Neumann extension of a symmetric, strictly positive operator $S \geq \varepsilon I_{\cH}$, an elementary characterization of the Krein--von Neumann extension $S_K$ as the unique self-adjoint extension of $S$ containing $\ker(S^*)$ in its domain is derived  in Lemma \ref{l3.2}. This result is then applied to derive an explicit description of the Krein--von Neumann extension of a strictly positive minimal Sturm--Liouville operator $T_{min}$ in terms of generalized boundary values. We conclude this paper with three nontrivial and representative examples in Section \ref{s4}, including a generalized Bessel operator, a singular operator relevant in the context of acoustic black holes, and the Jacobi operator. 

Finally, a few remarks on the notation employed: Given a separable complex 
Hilbert space $\cH$, $(\dott,\dott)_{\cH}$ denotes the scalar product in $\cH$ (linear in
the second factor), and $I_{\cH}$ represents the identity operator in $\cH$. 
The domain and range of a linear operator $T$ in $\cH$ are abbreviated by 
 $\dom(T)$ and $\ran(T)$. The closure of a closable operator $S$ is
denoted by $\ol S$. The kernel (null space) of $T$ is denoted by
$\ker(T)$. The spectrum, point spectrum (i.e., the set of eigenvalues), and resolvent set of a closed linear operator 
in $\cH$ will be abbreviated by $\sigma(\cdot)$, $\sigma_{p}(\cdot)$, 
and $\rho(\cdot)$, respectively. If $U_1$ and $U_2$ are subspaces of a Banach space $\cX$, their direct sum 
is denoted by $U_1 \dotplus U_2$. We also employ the shortcut $\bbN_0 = \bbN \cup \{0\}$. If the underlying 
$L^2$-space is understood, we denote the corresponding identity operator simply by $I$.

%%%%%%%%%%%%%%%%%%%%%%%%%%%%%%
%%%%%%%%%%%%%%%%%%%%%%%%%%%%%%
\section{The Basics of Weyl--Titchmarsh--Kodaira Theory} \lb{s2}
%%%%%%%%%%%%%%%%%%%%%%%%%%%%%%
%%%%%%%%%%%%%%%%%%%%%%%%%%%%%%

In this section, following \cite{GLN20} and \cite[Ch.~13]{GZ21}, we summarize the singular 
Weyl--Titchmarsh--Kodaira theory as needed to treat the Krein--von Neumann extension for singular, general 
Sturm--Liouville operators in the remainder of this paper. 

Throughout this section we make the following assumptions:

%%%%%%%%%%%%%
\begin{hypothesis} \lb{h2.1}
Let $(a,b) \subseteq \bbR$ and suppose that $p,q,r$ are $($Lebesgue\,$)$ measurable functions on $(a,b)$ 
such that the following items $(i)$--$(iii)$ hold: \\[1mm] 
$(i)$ \hspace*{1.1mm} $r>0$ a.e.~on $(a,b)$, $r\in\Ll$. \\[1mm] 
$(ii)$ \hspace*{.1mm} $p>0$ a.e.~on $(a,b)$, $1/p \in\Ll$. \\[1mm] 
$(iii)$ $q$ is real-valued a.e.~on $(a,b)$, $q\in\Ll$. 
\end{hypothesis}
%%%%%%%%%%%%%

Given Hypothesis \ref{h2.1}, we study Sturm--Liouville operators associated with the general, 
three-coefficient differential expression
\begin{equation}
\tau=\f{1}{r(x)}\left[-\f{d}{dx}p(x)\f{d}{dx} + q(x)\right] \, \text{ for a.e.~$x\in(a,b) \subseteq \bbR$,} 
   \lb{2.1}
\end{equation} 
and introduce maximal and minimal operators in $\Lr$ associated with $\tau$ in the usual manner as follows. 

%%%%%%%%%%%%%%
\begin{definition} \lb{d2.2}
Assume Hypothesis \ref{h2.1}. Given $\tau$ as in \eqref{2.1}, the {\it maximal operator} $T_{max}$ in $\Lr$ associated 
with $\tau$ is defined by
\begin{align}
&T_{max} f = \tau f,    \no
\\
& f \in \dom(T_{max})=\big\{g\in\Lr \, \big| \,g,g^{[1]}\in\ACl;   \lb{2.2} \\ 
& \hspace*{6.35cm}  \tau g\in\Lr\big\}.   \no
\end{align}
The {\it preminimal operator} $\dot T_{min} $ in $\Lr$ associated with $\tau$ is defined by 
\begin{align}
&\dot T_{min}  f = \tau f,   \no
\\
&f \in \dom \big(\dot T_{min}\big)=\big\{g\in\Lr \, \big| \, g,g^{[1]}\in\ACl;   \lb{2.3}
\\
&\hspace*{3.15cm} \supp \, (g)\subset(a,b) \text{ is compact; } \tau g\in\Lr\big\}.   \no
\end{align}

One can prove that $\dot T_{min} $ is closable, and one then defines the {\it minimal operator} $T_{min}$ as the closure of $\dot T_{min} $.
\end{definition}
%%%%%%%%%%%%%%

The following facts then are well known:
\begin{equation} 
\big(\dot T_{min}\big)^* = T_{max}, 
\end{equation} 
and hence $T_{max}$ is closed and $T_{min}=\ol{\dot T_{min} }$ is given by
\begin{align}
&T_{min} f = \tau f, \no
\\
&f \in \dom(T_{min})=\big\{g\in\Lr  \, \big| \,  g,g^{[1]}\in\ACl;     \lb{2.5} \\
& \qquad \text{for all } h\in\dom(T_{max}), \, W(h,g)(a)=0=W(h,g)(b); \, \tau g\in\Lr\big\}   
\no \\
& \quad =\big\{g\in\dom(T_{max})  \, \big| \, W(h,g)(a)=0=W(h,g)(b) \, 
\text{for all } h\in\dom(T_{max}) \big\}.   \no 
\end{align}
Moreover, $\dot T_{min} $ is essentially self-adjoint if and only if\; $T_{max}$ is symmetric, and then 
$\ol{\dot T_{min} }=T_{min}=T_{max}$.

Here the Wronskian of $f$ and $g$, for $f,g\in\ACl$, is defined by
\begin{equation}
W(f,g)(x) = f(x)g^{[1]}(x) - f^{[1]}(x)g(x), \quad x \in (a,b),    \lb{23.2.3.1}  
\end{equation}
with 
\begin{equation}
y^{[1]}(x) = p(x) y'(x), \quad x \in (a,b),
\end{equation}
denoting the first quasi-derivative of a function $y\in AC_{loc}((a,b))$.

The celebrated Weyl alternative then can be stated as follows:

%%%%%%%%%%%%%
\begin{theorem}[Weyl's Alternative] \lb{t2.3} ${}$ \\
Assume Hypothesis \ref{h2.1}. Then the following alternative holds: Either, \\[1mm] 
$(i)$ for every $z\in\bbC$, all solutions $u$ of $(\tau-z)u=0$ are in $\Lr$ near $b$ 
$($resp., near $a$$)$, \\[1mm] 
or, \\[1mm]
$(ii)$  for every $z\in\bbC$, there exists at least one solution $u$ of $(\tau-z)u=0$ which is not in $\Lr$ near $b$ $($resp., near $a$$)$. In this case, for each $z\in\bbC\bs\bbR$, there exists precisely one solution $u_b$ $($resp., $u_a$$)$ of $(\tau-z)u=0$ $($up to constant multiples$)$ which lies in $\Lr$ near $b$ $($resp., near $a$$)$. 
\end{theorem}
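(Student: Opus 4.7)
\textbf{Proof plan for Theorem \ref{t2.3} (Weyl's alternative).} The argument has three ingredients: a transfer principle in the spectral parameter, the classical Weyl nested-disks construction at non-real $z$, and a two-dimensionality argument. I treat the endpoint $b$; the analysis near $a$ is analogous.

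\textbf{Step 1 (the spectral parameter is immaterial).} First I would prove that if, for some $z_{0}\in\bbC$, every solution of $(\tau-z_{0})u=0$ lies in $\Lr$ near $b$, then the same holds at every $z\in\bbC$. Fix a fundamental system $u_{1},u_{2}$ of $(\tau-z_{0})u=0$ with $W(u_{1},u_{2})=1$, both in $\Lr$ near $b$, and a base point $c\in(a,b)$. For any solution $v$ of $(\tau-z)v=0$, variation of parameters yields, for $x\in[c,b)$,
\begin{equation*}
v(x)=\alpha\,u_{1}(x)+\beta\,u_{2}(x)+(z-z_{0})\int_{c}^{x}\bigl[u_{1}(x)u_{2}(t)-u_{2}(x)u_{1}(t)\bigr]\,v(t)\,r(t)\,dt.
\end{equation*}
Estimating the integral by Cauchy--Schwarz in the weight $r$ and using $u_{1},u_{2}\in\Lr$ near $b$, one arrives at a Gronwall-type bound of the form $\|v\|_{L^{2}((c,y);rdx)}^{2}\le A+\int_{c}^{y}g(s)\,\|v\|_{L^{2}((c,s);rdx)}^{2}\,ds$, with $g\in L^{1}((c,b);ds)$ and $A>0$ independent of $y\in(c,b)$, which yields $v\in\Lr$ near $b$. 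Consequently case~(i) either holds for every $z\in\bbC$ or fails for every $z\in\bbC$, reducing the remaining analysis to case~(ii) at $z\in\bbC\setminus\bbR$.

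\textbf{Step 2 (existence of an $L^{2}$ solution at non-real $z$).} For $z\in\bbC\setminus\bbR$ fix $c\in(a,b)$ and a real-initial-value fundamental system $\varphi(z,\cdot),\psi(z,\cdot)$ of $(\tau-z)u=0$ with $W(\varphi,\psi)=1$. For each $d\in(c,b)$ and each real parameter $\beta$, the requirement that $u_{m}:=\varphi(z,\cdot)+m\psi(z,\cdot)$ satisfy the real separated boundary condition $\cos(\beta)u_{m}(d)+\sin(\beta)u_{m}^{[1]}(d)=0$ singles out a unique $m=m(z,d,\beta)\in\bbC$, and as $\beta$ varies the points $m(z,d,\beta)$ trace a circle $C_{d}(z)\subset\bbC$. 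Green's identity applied to $u_{m}$ on $(c,d)$,
\begin{equation*}
(z-\ol z)\int_{c}^{d}|u_{m}(t)|^{2}r(t)\,dt=W(\ol{u_{m}},u_{m})(d)-W(\ol{u_{m}},u_{m})(c),
\end{equation*}
leads after a direct computation to the characterization that $m$ lies in the closed disk $\ol{D}_{d}(z)$ bounded by $C_{d}(z)$ exactly when
\begin{equation*}
\int_{c}^{d}|\varphi(z,t)+m\psi(z,t)|^{2}r(t)\,dt\le\f{\Im(m)}{\Im(z)}.
\end{equation*}
Monotonicity of the right-hand side in $d$ shows the disks $\ol{D}_{d}(z)$ are nested as $d\uparrow b$, so their intersection is nonempty and produces some $m_{b}(z)\in\bbC$ with $\varphi(z,\cdot)+m_{b}(z)\psi(z,\cdot)\in\Lr$ near $b$.

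\textbf{Step 3 (uniqueness and main obstacle).} Still in case~(ii), fix $z\in\bbC\setminus\bbR$. Solutions of $(\tau-z)u=0$ form a two-dimensional space; Step~2 furnishes at least one nontrivial $\Lr$-solution near $b$, while case~(ii) guarantees that not every solution is in $\Lr$ near $b$. Hence the $\Lr$-solutions near $b$ form a one-dimensional subspace, giving uniqueness up to scalar multiples. The chief technical hurdle is Step~2, namely the identification of the Weyl disk through the Green's-identity inequality and the verification that the disks shrink monotonically in $d$; Step~1 is routine but needs careful bookkeeping of the weight $r$ in the Cauchy--Schwarz estimate, while Step~3 is then immediate.
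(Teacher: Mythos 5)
The paper does not prove Theorem \ref{t2.3}; it is quoted as a classical result from the Weyl--Titchmarsh--Kodaira theory surveyed in Section \ref{s2}, with proofs deferred to \cite{GLN20}, \cite[Ch.~13]{GZ21} and the standard references. Your three-step plan (transfer of the ``all solutions in $\Lr$ near $b$'' property across the spectral parameter via variation of parameters and Gronwall, the nested Weyl-disk construction from Green's identity at non-real $z$, and the dimension count for uniqueness) is precisely the classical argument used in those sources, and each step as you outline it is sound.
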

%%%%%%%%%%

This yields the limit circle/limit point classification of $\tau$ at an interval endpoint and links self-adjointness of $T_{min}$ (resp., $T_{max}$) and the limit point property of $\tau$ at both endpoints as follows. 

%%%%%%%%%%%%%
\begin{definition} \lb{d2.4} 
Assume Hypothesis \ref{h2.1}. \\[1mm]  
In case $(i)$ in Theorem \ref{t2.3}, $\tau$ is said to be in the {\it limit circle case} at $b$ $($resp., at $a$$)$. $($Frequently, $\tau$ is then called {\it quasi-regular} at $b$ $($resp., $a$$)$.$)$
\\[1mm] 
In case $(ii)$ in Theorem \ref{t2.3}, $\tau$ is said to be in the {\it limit point case} at $b$ $($resp., at $a$$)$. \\[1mm]
If $\tau$ is in the limit circle case at $a$ and $b$ then $\tau$ is also called {\it quasi-regular} on $(a,b)$. 
\end{definition}
%%%%%%%%%%%%%

%%%%%%%%%%%%%%%
\begin{theorem} \lb{t2.5}
Assume Hypothesis~\ref{h2.1}, then the following items $(i)$ and $(ii)$ hold: \\[1mm] 
$(i)$ If $\tau$ is in the limit point case at $a$ $($resp., $b$$)$, then 
\begin{equation} 
W(f,g)(a)=0 \, \text{$($resp., $W(f,g)(b)=0$$)$ for all $f,g\in\dom(T_{max})$.} 
\end{equation} 
$(ii)$ Let $T_{min}=\ol{\dot T_{min} }$. Then
\begin{align}
\begin{split}
n_\pm(T_{min}) &= \dim(\ker(T_{max} \mp i I))    \\
& = \begin{cases}
2 & \text{if $\tau$ is in the limit circle case at $a$ and $b$,}\\
1 & \text{if $\tau$ is in the limit circle case at $a$} \\
& \text{and in the limit point case at $b$, or vice versa,}\\
0 & \text{if $\tau$ is in the limit point case at $a$ and $b$}.
\end{cases}
\end{split} 
\end{align}
In particular, $T_{min} = T_{max}$ is self-adjoint $\big($i.e., $\dot T_{min}$ is essentially self-adjoint\,$\big)$ if and only if $\tau$ is in the limit point case at $a$ and $b$. 
\end{theorem}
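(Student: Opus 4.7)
The proof of both parts rests on Lagrange's identity for the maximal operator. For $f, g \in \dom(T_{max})$ and compact $[c,d]\subset(a,b)$, integration by parts gives
$$\int_c^d [(\tau f)(x) g(x) - f(x)(\tau g)(x)] r(x) \, dx = W(f,g)(d) - W(f,g)(c).$$
Since $f, g, \tau f, \tau g \in \Lr$, the integrand is in $L^1((a,b); dx)$ by Cauchy--Schwarz against $r\,dx$, so sending $c \to a^+$ and $d \to b^-$ yields finite limits $W(f,g)(a)$ and $W(f,g)(b)$. This legitimizes the boundary Wronskians appearing in both parts.

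\textbf{Part (i).} I will treat $b$; the case of $a$ is symmetric. Assume $\tau$ is in the limit point case at $b$. By Weyl's alternative, for each $z \in \bbC\setminus\bbR$ the equation $(\tau - z)u = 0$ admits, up to scalar multiples, a unique solution $\psi_z$ lying in $\Lr$ near $b$. The plan is to fix $z = i$, complement $\psi_i$ by a linearly independent solution $\phi_i$ with $W(\psi_i, \phi_i) = 1$ (necessarily with $\phi_i \notin \Lr$ near $b$), and use variation of parameters to express any $f \in \dom(T_{max})$, on a half-neighborhood $[c, b)$, in the form $f = A_f \psi_i + B_f \phi_i$, where the coefficient functions $A_f, B_f$ are explicit antiderivatives involving $r\cdot (T_{max} - iI)f$ integrated against $\phi_i$ and $\psi_i$, respectively. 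The crucial step---and the main technical obstacle---is to show $B_f(x) \to 0$ as $x \to b$: this is the standard ``projection onto the limit-point solution'' argument in Weyl theory, exploiting $f \in \Lr$ near $b$ together with $\phi_i \notin \Lr$ near $b$. Once this is established, a direct bilinear computation yields $W(f,g)(x) = A_f(x) B_g(x) - B_f(x) A_g(x)$ for $x \in [c,b)$, so $W(f,g)(b) = 0$ follows because the two $B$-coefficients vanish at $b$ while the corresponding $A$-coefficients remain bounded.

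\textbf{Part (ii).} This is a dimension count that reduces $n_\pm(T_{min}) = \dim \ker(T_{max} \mp iI)$ to counting $\Lr$-solutions of $(\tau \mp i) u = 0$. The second-order ODE has a two-dimensional solution space on $(a,b)$, and a solution lies in $\Lr$ iff it is square-integrable with respect to $r\,dx$ near both endpoints. Weyl's alternative (Theorem~\ref{t2.3}) determines the local $\Lr$-dimension at each endpoint for nonreal $z$: $2$ in the limit circle case, $1$ in the limit point case. Intersecting the two local conditions produces the dimensions $2, 1, 1, 0$ listed in the four cases, and these are independent of the sign of $z = \pm i$, so $n_+(T_{min}) = n_-(T_{min})$ as asserted. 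The concluding equivalence---that $\dot T_{min}$ is essentially self-adjoint iff $\tau$ is limit point at both endpoints---then follows from the standard criterion that a closed densely defined symmetric operator is self-adjoint iff both its deficiency indices vanish.
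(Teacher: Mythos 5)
Your reduction of both parts to Lagrange's identity and to counting $\Lr$-solutions is the right framework (the paper itself offers no proof of Theorem \ref{t2.5}, quoting it from \cite{GLN20} and \cite[Ch.~13]{GZ21}, so I compare with the standard arguments there), but both halves of your sketch have a genuine gap exactly where the limit-point mechanism must actually be invoked. In part (i), the decomposition $f=A_f\psi_i+B_f\phi_i$ with $B_f=W(\psi_i,f)$, $A_f=-W(\phi_i,f)$ and the identity $W(f,g)=A_fB_g-A_gB_f$ are correct, and $B_f(x)$ does converge as $x\to b$ (Cauchy--Schwarz, since $\psi_i\in\Lr$ near $b$). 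However, your two key claims are, respectively, unproved and false. That $B_f(b)=0$ is essentially the theorem itself and does not follow from ``$f\in L^2$, $\phi_i\notin L^2$'', because the remainder $A_f\psi_i$ is not a constant multiple of $\psi_i$. Worse, $A_f(x)=\alpha+\int_c^x\phi_i\,[(\tau-i)f]\,r\,dt$ need not stay bounded, since $\phi_i\notin\Lr$ near $b$ while $(\tau-i)f\in\Lr$: for $\tau=-d^2/dx^2$ on a half-line, taking $h(t)=e^{-(1-i)t/\sqrt{2}}/t$ and solving $(\tau-i)f=h$ by variation of parameters produces $f\in\dom(T_{max})$ with $A_f(x)\sim\mathrm{const}\cdot\ln x$. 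With the $A$'s possibly unbounded and the $B$'s merely tending to $0$, the product formula does not force $W(f,g)(b)=0$. The standard route is the reverse of yours: one first gets the deficiency count of part (ii), then applies von Neumann's formula $\dim(\dom(T_{max})/\dom(T_{min}))=n_++n_-$ on a subinterval $(c,b)$ with $c$ regular; in the limit point case this quotient is two-dimensional and already exhausted by the data $(g(c),g^{[1]}(c))$, so any $g\in\dom(T_{max})$ with $g(c)=g^{[1]}(c)=0$ lies in $\dom(T_{min})$ and by \eqref{2.5} has vanishing Wronskian at $b$ against everything; the general case follows by correcting the data at $c$ with a function supported near $c$.

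In part (ii), the limit circle/limit circle and mixed cases do follow at once from Theorem \ref{t2.3}, but in the limit point/limit point case the two local $L^2$-subspaces are each one-dimensional inside the two-dimensional solution space, so their intersection is a priori of dimension $0$ \emph{or} $1$; you still must exclude a global $\Lr$-eigenfunction of $T_{max}$ with eigenvalue $\pm i$. The usual way is via part (i): if $u\in\ker(T_{max}-iI)$, then $W(\bar u,u)(b)-W(\bar u,u)(a)=-2i\|u\|^2_{L^2((a,b);rdx)}$, and part (i) forces the left-hand side to vanish, whence $u=0$. This makes the two parts logically intertwined, so the self-contained order you propose (a direct part (i), then part (ii)) cannot be carried out as sketched; either prove part (ii) first with the intersection argument completed by the Weyl-circle analysis, or prove part (i) by the quotient-dimension argument and then deduce the zero in the last line of part (ii) from it.
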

%%%%%%%%%%%

Next, we turn to a description of all self-adjoint extensions of $T_{min}$.

%%%%%%%%%%%%%
\begin{theorem} \lb{t2.6}
Assume Hypothesis \ref{h2.1} and that $\tau$ is in the limit circle case at $a$ and $b$ $($i.e., $\tau$ is quasi-regular 
on $(a,b)$$)$. In addition, assume that 
$v_j \in \dom(T_{max})$, $j=1,2$, satisfy 
\begin{equation}
W(\ol{v_1}, v_2)(a) = W(\ol{v_1}, v_2)(b) = 1, \quad W(\ol{v_j}, v_j)(a) = W(\ol{v_j}, v_j)(b) = 0, \; j= 1,2.  
\end{equation}
$($E.g., real-valued solutions $v_j$, $j=1,2$, of $(\tau - \lambda) u = 0$ with $\lambda \in \bbR$, such that 
$W(v_1,v_2) = 1$.$)$ For $g\in\dom(T_{max})$ we introduce the generalized boundary values 
\begin{align}
\begin{split} 
\wti g_1(a) &= - W(v_2, g)(a), \quad \wti g_1(b) = - W(v_2, g)(b),    \\
\wti g_2(a) &= W(v_1, g)(a), \quad \;\,\,\, \wti g_2(b) = W(v_1, g)(b).   \lb{2.10}
\end{split} 
\end{align}
Then the following items $(i)$--$(iii)$ hold: \\[1mm] 
$(i)$ All self-adjoint extensions $T_{\ga,\de}$ of $T_{min}$ with separated boundary conditions are of the form
\begin{align}
& T_{\ga,\de} f = \tau f, \quad \ga,\de\in[0,\pi),   \no \\
& f \in \dom(T_{\ga,\de})=\big\{g\in\dom(T_{max}) \, \big| \, \sin(\ga) \wti g_2(a) + \cos(\ga) \wti g_1(a) = 0;   \lb{2.11} \\ 
& \hspace*{5.5cm} \, \sin(\de) \wti g_2(b) + \cos(\de) \wti g_1(b) = 0 \big\}.    \no 
\end{align}
$(ii)$ All self-adjoint extensions $T_{\varphi,R}$ of $T_{min}$ with coupled boundary conditions are of the type
\begin{align}
\begin{split} 
& T_{\varphi,R} f = \tau f,    \\
& f \in \dom(T_{\varphi,R})=\bigg\{g\in\dom(T_{max}) \, \bigg| \begin{pmatrix} \wti g_1(b) \\ \wti g_2(b)\end{pmatrix} 
= e^{i\varphi}R \begin{pmatrix}
\wti g_1(a)\\ \wti g_2(a)\end{pmatrix} \bigg\}, \lb{2.11a}
\end{split}
\end{align}
where $\varphi\in[0,2\pi)$, and $R$ is a real $2\times2$ matrix with $\det(R)=1$ 
$($i.e., $R \in SL(2,\bbR)$$)$.  \\[1mm] 
$(iii)$ Every self-adjoint extension of $T_{min}$ is either of type $(i)$ $($i.e., separated\,$)$ or of type 
$(ii)$ $($i.e., coupled\,$)$.
\end{theorem}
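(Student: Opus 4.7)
The plan is to recast the description of all self-adjoint extensions of $T_{min}$ as a classification of Lagrangian subspaces of a four-dimensional Hermitian symplectic space, and then carry out that classification explicitly in terms of the generalized boundary values.

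I would begin with Green's second identity: for $f,g\in\dom(T_{max})$,
\begin{equation*}
(T_{max}f,g)_{\Lr} - (f,T_{max}g)_{\Lr} = W(\ol f,g)(b) - W(\ol f,g)(a).
\end{equation*}
Using the normalizations $W(\ol{v_1},v_2)(c)=1$ and $W(\ol{v_j},v_j)(c)=0$ for $c\in\{a,b\}$, combined with the Pl\"ucker (Jacobi) relation for four $2\times 2$ determinants, a short computation yields
\begin{equation*}
W(\ol f,g)(c) = \ol{\wti f_1(c)}\,\wti g_2(c) - \ol{\wti f_2(c)}\,\wti g_1(c), \quad c\in\{a,b\},
\end{equation*}
so that the boundary form is completely encoded in the four-tuple $\Psi(g):=(\wti g_1(a),\wti g_2(a),\wti g_1(b),\wti g_2(b))\in\bbC^4$.

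By quasi-regularity and Theorem~\ref{t2.5}, $T_{min}$ has deficiency indices $(2,2)$, and \eqref{2.5} gives $\ker\Psi=\dom(T_{min})$. A cutoff construction applied to $v_1,v_2$ near each endpoint produces elements of $\dom(T_{max})$ realizing any prescribed boundary $4$-tuple, so $\Psi$ descends to a linear isomorphism $\dom(T_{max})/\dom(T_{min})\to\bbC^4$. The Glazman--Krein--Naimark extension theorem then puts the self-adjoint extensions $\wti T$ of $T_{min}$ in bijection with the $2$-dimensional Lagrangian subspaces $L\subset\bbC^4$ for the Hermitian symplectic form
\begin{equation*}
\Omega(\vec x,\vec y) = \big[\ol{x_3}y_4-\ol{x_4}y_3\big] - \big[\ol{x_1}y_2-\ol{x_2}y_1\big],
\end{equation*}
via $\dom(\wti T) = \Psi^{-1}(L)$.

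It remains to classify the Lagrangian subspaces of $(\bbC^4,\Omega)$. Writing $\bbC^4=V_a\oplus V_b$ with $V_c\cong\bbC^2$ and $\Omega=\Omega_b-\Omega_a$, a dimension count for a 2-dim Lagrangian $L$ (using $L\cap V_c=(\pi_c L)^{\Omega_c}$) shows that exactly one of two mutually exclusive possibilities holds: either $L=L_a\oplus L_b$ with each $L_c$ a 1-dim isotropic subspace of $(V_c,\Omega_c)$, or $L$ is the graph of a linear bijection $U:V_a\to V_b$; this dichotomy is (iii). A 1-dim line $\bbC\cdot(\alpha,\beta)\subset\bbC^2$ is isotropic iff $\ol{\alpha}\beta\in\bbR$, which forces $(\alpha,\beta)$ to be real up to a common phase and yields the parameterization by $\ga,\de\in[0,\pi)$ in (i). In the graph case the Lagrangian condition becomes $U^*JU=J$ for the standard $2\times 2$ symplectic matrix $J$; denoting the entries of $U$ by $a,b,c,d$, the four resulting equations reduce to $\ol{a}c,\ol{b}d\in\bbR$ together with $\ol{a}d-\ol{c}b=1$, from which one extracts a common phase $e^{i\varphi}$ to factor $U=e^{i\varphi}R$ with $R\in SL(2,\bbR)$, giving (ii). I expect the main technical obstacle to be this phase-extraction step, which requires a case analysis of possibly vanishing entries of $U$ and implicitly accounts for the $\bbZ/2$ redundancy $(\varphi,R)\sim(\varphi+\pi,-R)$ in the parameterization.
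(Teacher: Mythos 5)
Your proposal is correct. Note that the paper itself does not prove Theorem \ref{t2.6}: Section \ref{s2} is expressly a survey, and the result is quoted from \cite{GLN20} and \cite[Ch.~13]{GZ21}. The route you take --- Lagrange's identity to express the boundary form of $T_{max}$ through the four generalized boundary values, surjectivity of the boundary map onto $\bbC^4$ via a cutoff/patching argument, the Glazman--Krein--Naimark correspondence between self-adjoint extensions and Lagrangian planes of the Hermitian symplectic form $\Omega=\Omega_b-\Omega_a$, and then the explicit classification of those planes --- is essentially the standard argument carried out in those references, so there is no genuine divergence of method. Your dimension count $\dim(L\cap V_c)=2-\dim\pi_{c'}L$ cleanly yields the dichotomy in item $(iii)$, and the phase-extraction from $\ol{a}c,\ol{b}d\in\bbR$, $\ol{a}d-\ol{c}b=1$ (with the case analysis for vanishing entries and the $(\varphi,R)\sim(\varphi+\pi,-R)$ redundancy) is exactly the point that needs the care you flag. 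The only detail worth making explicit when writing this up is the compatibility between the conjugated normalization $W(\ol{v_1},v_2)(c)=1$ in the hypothesis and the unconjugated Wronskians defining $\wti g_1,\wti g_2$ in \eqref{2.10}; this is what makes your identity $W(\ol f,g)(c)=\ol{\wti f_1(c)}\,\wti g_2(c)-\ol{\wti f_2(c)}\,\wti g_1(c)$ (the sesquilinear analogue of \eqref{2.15}) come out with the right signs, and it is immediate for the real-valued choice of $v_1,v_2$ indicated in the theorem.
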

%%%%%%%%%%%%%%

%%%%%%%
\begin{remark} \lb{r2.7}
$(i)$ If $\tau$ is in the limit point case at one endpoint, say, at the endpoint $b$, one omits the corresponding boundary condition involving $\delta \in [0, \pi)$ at $b$ in \eqref{2.11} to obtain all self-adjoint extensions $T_{\gamma}$ of 
$T_{min}$, indexed by $\gamma \in [0, \pi)$. In the case where $\tau$ is in the limit point case at both endpoints, all boundary values and boundary conditions become superfluous as in this case $T_{min} = T_{max}$ is self-adjoint. \\[1mm] 
$(ii)$ Assume the special case where $\tau$ is regular on the finite interval $[a,b]$, that is, suppose that Hypothesis \ref{h2.1} is replaced by the more stringent set of assumptions: \\[1mm] 
{\bf Hypothesis} ($\tau$ regular on $[a,b]$.) \\[1mm] 
Let $(a,b) \subset \bbR$ be a finite interval and suppose that $p,q,r$ are $($Lebesgue\,$)$ measurable functions on $(a,b)$  
such that the following items $(i')$--$(iii')$ hold: \\[1mm] 
$(i')$ $r > 0$ a.e.~on $(a,b)$, $r \in L^1((a,b);dx)$. \\[1mm]
$(ii')$ $p > 0$ a.e.~on $(a,b)$, $1/p \in L^1((a,b);dx)$. \\[1mm]
$(iii')$ $q$ is real-valued a.e.~on $(a,b)$, $q \in L^1((a,b);dx)$.  \\[1mm] 
\indent 
In this case one chooses $v_j \in \dom(T_{max})$, $j=1,2$, 
such that 
\begin{align}
v_1(x) = \begin{cases} \theta_0(\lambda,x,a), & \text{for $x$ near $a$}, \\
\theta_0(\lambda,x,b), & \text{for $x$ near $b$},  \end{cases}   \quad 
v_2(x) = \begin{cases} \phi_0(\lambda,x,a), & \text{for $x$ near $a$}, \\
\phi_0(\lambda,x,b), & \text{for $x$ near $b$},  \end{cases}   \lb{2.12}
\end{align} 
where $\phi_0(\lambda,\, \cdot \,,d)$, $\theta_0(\lambda,\, \cdot \,,d)$, $d \in \{a,b\}$, are real-valued solutions of $(\tau - \lambda) u = 0$, $\lambda \in \bbR$, satisfying the boundary conditions 
\begin{align}
\begin{split} 
& \phi_0(\lambda,a,a) = \theta_0^{[1]}(\lambda,a,a) = 0, \quad \theta_0(\lambda,a,a) = \phi_0^{[1]}(\lambda,a,a) = 1, \\ 
& \phi_0(\lambda,b,b) = \theta_0^{[1]}(\lambda,b,b) = 0, \quad \; \theta_0(\lambda,b,b) = \phi_0^{[1]}(\lambda,b,b) = 1. 
\lb{2.13}
\end{split} 
\end{align} 
Then one verifies that
\begin{align}
\wti g_1 (a) = g(a), \quad \wti g_1 (b) = g(b), \quad \wti g_2 (a) = g^{[1]}(a), \quad \wti g_2 (b) = g^{[1]}(b),   \lb{2.14} 
\end{align}
and hence Theorem \ref{t2.6} in the special regular case recovers the well-known situation of separated self-adjoint boundary conditions for three-coefficient regular Sturm--Liouville operators in $\Lr$. 
\\[1mm]
$(iii)$ In connection with \eqref{2.10}, an explicit calculation demonstrates that for $g, h \in \dom(T_{max})$,
\begin{equation}
\wti g_1(d) \wti h_2(d) - \wti g_2(d) \wti h_1(d) = W(g,h)(d), \quad d \in \{a,b\},   \lb{2.15}
\end{equation} 
interpreted in the sense that either side in \eqref{2.15} has a finite limit as $d \downarrow a$ and $d \uparrow b$. 
Of course, for \eqref{2.15} to hold at $d \in \{a,b\}$, it suffices that $g$ and $h$ lie locally in $\dom(T_{max})$ near $x=d$.  \hfill $\diamond$
\end{remark} 
%%%%%%%

In the special case where $T_{min}$ is bounded from below, one can further analyze the generalized boundary values  \eqref{2.10} in the singular context by invoking principal and nonprincipal solutions of $\tau u = \lambda u$ for appropriate $\lambda \in \bbR$. This leads to natural analogs of \eqref{2.14} also in the singular case, and we will turn to this topic next. 

We start by reviewing some oscillation theory with particular emphasis on principal and nonprincipal solutions, a notion originally due to Leighton and Morse \cite{LM36}, Rellich \cite{Re43}, \cite{Re51}, and Hartman and Wintner \cite[Appendix]{HW55} (see also \cite{CGN16}, \cite[Sects~13.6, 13.9, 13.0]{DS88}, 
\cite[Ch.~XI]{Ha02}, \cite{NZ92}, \cite[Chs.~4, 6--8]{Ze05}). 

%%%%%%%%%%%%%%
\begin{definition} \lb{d2.8}
Assume Hypothesis \ref{h2.1}. \\[1mm] 
$(i)$ Fix $c\in (a,b)$ and $\lambda\in\bbR$. Then $\tau - \lam$ is
called {\it nonoscillatory} at $a$ $($resp., $b$$)$, 
if every real-valued solution $u(\lambda,\dott)$ of 
$\tau u = \lambda u$ has finitely many
zeros in $(a,c)$ $($resp., $(c,b)$$)$. Otherwise, $\tau - \lam$ is called {\it oscillatory}
at $a$ $($resp., $b$$)$. \\[1mm] 
$(ii)$ Let $\lambda_0 \in \bbR$. Then $T_{min}$ is called bounded from below by $\lambda_0$, 
and one writes $T_{min} \geq \lambda_0 I$, if 
\begin{equation} 
(u, [T_{min} - \lambda_0 I]u)_{L^2((a,b);rdx)}\geq 0, \quad u \in \dom(T_{min}).
\end{equation}
\end{definition}
%%%%%%%%%%%%%%

The following is a key result. 

%%%%%%%%%%%%%%
\begin{theorem} \lb{t2.9} 
Assume Hypothesis \ref{h2.1}. Then the following items $(i)$--$(iii)$ are
equivalent: \\[1mm] 
$(i)$ $T_{min}$ $($and hence any symmetric extension of $T_{min})$
is bounded from below. \\[1mm] 
$(ii)$ There exists a $\nu_0\in\bbR$ such that for all $\lambda < \nu_0$, $\tau - \lam$ is
nonoscillatory at $a$ and $b$. \\[1mm]
$(iii)$ For fixed $c, d \in (a,b)$, $c \leq d$, there exists a $\nu_0\in\bbR$ such that for all
$\lambda<\nu_0$, $\tau u = \lambda u$ has $($real-valued\,$)$ nonvanishing solutions
$u_a(\lambda,\dott) \neq 0$,
$\hatt u_a(\lambda,\dott) \neq 0$ in the neighborhood $(a,c]$ of $a$, and $($real-valued\,$)$ nonvanishing solutions
$u_b(\lambda,\dott) \neq 0$, $\hatt u_b(\lambda,\dott) \neq 0$ in the neighborhood $[d,b)$ of
$b$, such that 
\begin{align}
&W(\hatt u_a (\lambda,\dott),u_a (\lambda,\dott)) = 1,
\quad u_a (\lambda,x)=\oh(\hatt u_a (\lambda,x))
\text{ as $x\downarrow a$,} \lb{2.17} \\
&W(\hatt u_b (\lambda,\dott),u_b (\lambda,\dott))\, = 1,
\quad u_b (\lambda,x)\,=\oh(\hatt u_b (\lambda,x))
\text{ as $x\uparrow b$,} \lb{2.18} \\
&\int_a^c dx \, p(x)^{-1}u_a(\lambda,x)^{-2}=\int_d^b dx \, 
p(x)^{-1}u_b(\lambda,x)^{-2}=\infty,  \lb{2.19} \\
&\int_a^c dx \, p(x)^{-1}{\hatt u_a(\lambda,x)}^{-2}<\infty, \quad 
\int_d^b dx \, p(x)^{-1}{\hatt u_b(\lambda,x)}^{-2}<\infty. \lb{2.20}
\end{align}
\end{theorem}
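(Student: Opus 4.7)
The approach is to close the loop of equivalences by establishing (iii) $\Rightarrow$ (ii) (trivial), (i) $\Leftrightarrow$ (ii) (the Jacobi--Sturm oscillation characterization of semiboundedness), and (ii) $\Rightarrow$ (iii) (the Leighton--Morse--Rellich--Hartman--Wintner construction of principal and nonprincipal solutions). Implication (iii) $\Rightarrow$ (ii) is immediate from the definitions: the nonvanishing real solutions $u_a$ on $(a,c]$ and $u_b$ on $[d,b)$ rule out accumulation of zeros at the endpoints.

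For (i) $\Leftrightarrow$ (ii), I would argue the forward direction by contradiction. If $T_{min} \geq \lambda_0 I$ but $\tau - \lambda$ is oscillatory at, say, $b$ for some $\lambda < \lambda_0$, take a real solution $\phi$ of $(\tau - \lambda) \phi = 0$ with zeros $\{x_n\}$ accumulating at $b$ and form $u_n := \phi \chi_{[x_n,x_{n+1}]}$, extended by zero. After a routine mollification, $u_n$ lies in the form domain of $T_{min}$, and an integration by parts using $\phi(x_n) = \phi(x_{n+1}) = 0$ together with $\tau \phi = \lambda \phi$ gives the quadratic form value $\mathfrak{t}[u_n] = \lambda \|u_n\|_{\Lr}^2$, contradicting the lower form bound $\lambda_0 \|u_n\|_{\Lr}^2$. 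The converse relies on the Riccati factorization: for a positive real solution $u$ of $(\tau - \lambda) u = 0$ on a subinterval $J$ and $f \in C_c^\infty(J)$, setting $g := f/u$ and substituting $(pu')'/u = q - \lambda r$ yields
\begin{equation*}
\int_J \bigl[ p(x) |f'(x)|^2 + (q(x) - \lambda r(x)) |f(x)|^2 \bigr] \, dx = \int_J p(x) u(x)^2 |g'(x)|^2 \, dx \geq 0.
\end{equation*}
Applying this on endpoint neighborhoods using the principal solutions constructed below, combined with an IMS-type partition-of-unity localization that handles the compact middle region via regular Sturm--Liouville theory, gives $T_{min} \geq \lambda_* I$ for some $\lambda_* \in \bbR$.

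The (ii) $\Rightarrow$ (iii) step is the heart of the argument. Fix $\lambda < \nu_0$ (possibly shrinking $\nu_0$ so that nonvanishing extends to the prescribed intervals, as permitted by Sturm comparison). Select a nontrivial real solution $u_0$ of $(\tau - \lambda) u = 0$ nonvanishing on $[d,b)$, together with a linearly independent $v_0$ satisfying $W(u_0, v_0) = 1$. Since $(v_0/u_0)'(x) = [p(x) u_0(x)^2]^{-1} > 0$, the ratio $v_0/u_0$ is monotone on $[d,b)$ with some limit $L \in (-\infty, +\infty]$ as $x \uparrow b$. Define $u_b(\lambda, \cdot) := v_0 - L u_0$ if $L < \infty$ and $u_b(\lambda, \cdot) := u_0$ if $L = +\infty$. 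A direct Leighton--Morse calculation then verifies that $u_b$ is the unique principal solution up to scalar multiples, with $\int_d^b [p u_b^2]^{-1} dx = \infty$; any linearly independent solution $\hat u_b$, normalized so that $W(\hat u_b, u_b) = 1$, satisfies $\int_d^b [p \hat u_b^2]^{-1} dx < \infty$ and $u_b(x) = \oh(\hat u_b(x))$ as $x \uparrow b$. The construction near $a$ is identical and yields $u_a, \hat u_a$ with the analogous properties.

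The principal obstacle I anticipate is the passage from the endpoint-only positivity of (iii) to a globally usable Riccati factorization in (ii) $\Rightarrow$ (i); the IMS partition-of-unity workaround bridges this gap, but only at the expense of an additive constant in the lower bound $\lambda_*$, so pinning down a sharp constant is nontrivial. A secondary technical point is the mollification required to place the truncated oscillating functions $u_n$ into the form domain of $T_{min}$ for the (i) $\Rightarrow$ (ii) direction; this is routine but must be verified within Hypothesis \ref{h2.1}.
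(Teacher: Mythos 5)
The paper itself gives no proof of Theorem \ref{t2.9}: Section \ref{s2} is a survey, and this result is quoted from \cite{GLN20} and \cite[Ch.~13]{GZ21} (going back to Leighton--Morse, Rellich, and Hartman--Wintner). Your proposal follows what is essentially the standard route in those sources: truncation between consecutive zeros for $(i)\Rightarrow(ii)$, the Jacobi/Riccati factorization $\int_J\big[p|f'|^2+(q-\lambda r)|f|^2\big]dx=\int_J p\,u^2|(f/u)'|^2dx$ for $(ii)\Rightarrow(i)$, and Hartman's construction of the principal solution via the monotone ratio $v_0/u_0$ for $(ii)\Rightarrow(iii)$. The $(ii)\Rightarrow(iii)$ part is correct and complete as sketched, including the case split on $L=\lim_{x\uparrow b}v_0(x)/u_0(x)$ and the derivation of \eqref{2.17}--\eqref{2.20} from $(w_1/w_2)'=W(w_2,w_1)/(p\,w_2^2)$. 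Two points deserve attention. First, in $(iii)\Rightarrow(ii)$ the existence of \emph{one} nonvanishing solution near an endpoint does not by itself rule out oscillation of \emph{every} real solution (which is what Definition \ref{d2.8} requires); you need to invoke the Sturm separation theorem, which shows that any solution linearly independent of $u_b$ has at most one zero in $[d,b)$. This is a one-line fix but should be stated.

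The genuinely delicate step is the IMS localization in $(ii)\Rightarrow(i)$. The localization error is $\int p\,\big(\sum_j|\chi_j'|^2\big)|f|^2\,dx$, and to absorb it you need $p\sum_j|\chi_j'|^2\leq C\,r$ a.e.\ on the (compact) transition region. Hypothesis \ref{h2.1} only guarantees $1/p\in L^1_{loc}$ and $r\in L^1_{loc}$ with $r>0$ a.e.; it does \emph{not} make $p/r$ locally bounded, so this inequality can fail for every admissible cutoff, and the error term need not be controlled by $\|f\|^2_{L^2((a,b);rdx)}$. The standard way around this (and the one implicit in the cited references) is the Glazman decomposition principle: split at an interior point $c$, observe that $T_{min}$ and $T^{(a,c)}_{min}\oplus T^{(c,b)}_{min}$ have a common closed symmetric restriction of finite codimension, use that semiboundedness of a symmetric operator is preserved under finite-codimensional domain perturbations, and then apply your factorization identity separately on $C_0^\infty((a,c))$ and $C_0^\infty((c,b))$ with the globally nonvanishing solutions obtained after shrinking $\nu_0$. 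With that substitution (and the Sturm-separation remark above), your argument is complete; the rest is the classical proof.
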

%%%%%%%%%%%%%%

%%%%%%%%%%%%%% 
\begin{definition} \lb{d2.10}
Assume Hypothesis \ref{h2.1}, suppose that $T_{min}$ is bounded from below, and let 
$\lambda\in\bbR$. Then $u_a(\lambda,\dott)$ $($resp., $u_b(\lambda,\dott)$$)$ in Theorem
\ref{t2.9}\,$(iii)$ is called a {\it principal} $($or {\it minimal}\,$)$
solution of $\tau u=\lambda u$ at $a$ $($resp., $b$$)$. A real-valued solution 
$\wti{\wti u}_a(\lambda,\dott)$ $($resp., $\wti{\wti u}_b(\lambda,\dott)$$)$ of $\tau
u=\lambda u$ linearly independent of $u_a(\lambda,\dott)$ $($resp.,
$u_b(\lambda,\dott)$$)$ is called {\it nonprincipal} at $a$ $($resp., $b$$)$.
\end{definition}
%%%%%%%%%%%%%%

Principal and nonprincipal solutions are well-defined due to Lemma \ref{l2.11} below. 

%%%%%%%%%%%%%%%
\begin{lemma} \lb{l2.11} Assume Hypothesis \ref{h2.1} and suppose that $T_{min}$ is bounded 
from below. Then $u_a(\lambda,\dott)$ and $u_b(\lambda,\dott)$ in Theorem
\ref{t2.9}\,$(iii)$ are unique up to $($nonvanish- ing\,$)$ real constant multiples. Moreover,
$u_a(\lambda,\dott)$ and $u_b(\lambda,\dott)$ are minimal solutions of
$\tau u=\lambda u$ in the sense that 
\begin{align}
u(\lambda,x)^{-1} u_a(\lambda,x)&=\oh(1) \text{ as $x\downarrow a$,} 
\lb{2.21} \\ 
u(\lambda,x)^{-1} u_b(\lambda,x)&=\oh(1) \text{ as $x\uparrow b$,} \lb{2.22}
\end{align}
for any other solution $u(\lambda,\dott)$ of $\tau u=\lambda u$
$($which is nonvanishing near $a$, resp., $b$$)$ with
$W(u_a(\lambda,\dott),u(\lambda,\dott))\neq 0$, respectively, 
$W(u_b(\lambda,\dott),u(\lambda,\dott))\neq 0$. 
\end{lemma}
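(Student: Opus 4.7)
The plan is to prove the minimality statement \eqref{2.21}--\eqref{2.22} first, and then deduce the uniqueness clause as a quick consequence by playing the minimality property against itself.

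For minimality at $a$, I would fix the pair $u_a(\lam,\dott)$, $\hatt u_a(\lam,\dott)$ supplied by Theorem~\ref{t2.9}\,$(iii)$, so that $W(\hatt u_a(\lam,\dott),u_a(\lam,\dott))=1$ and $u_a(\lam,x)=\oh(\hatt u_a(\lam,x))$ as $x\downarrow a$. Since these two solutions span the two-dimensional solution space of $\tau u=\lam u$, any other solution admits a unique representation $u(\lam,\dott)=c_1\,u_a(\lam,\dott)+c_2\,\hatt u_a(\lam,\dott)$ with $c_1,c_2\in\bbC$, and an immediate Wronskian computation gives $W(u_a(\lam,\dott),u(\lam,\dott))=-c_2$. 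Thus the hypothesis $W(u_a,u)\neq 0$ is equivalent to $c_2\neq 0$, and dividing numerator and denominator by $\hatt u_a(\lam,x)$ yields
\begin{equation*}
\frac{u_a(\lam,x)}{u(\lam,x)}
=\frac{u_a(\lam,x)/\hatt u_a(\lam,x)}{c_1\,u_a(\lam,x)/\hatt u_a(\lam,x)+c_2}
\longrightarrow 0 \quad\text{as } x\downarrow a,
\end{equation*}
which is \eqref{2.21}. The argument at $b$ is word for word identical, using the pair $u_b$, $\hatt u_b$ and the decay relation \eqref{2.18}.

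For uniqueness at $a$, suppose $u_a^{(1)}(\lam,\dott)$ and $u_a^{(2)}(\lam,\dott)$ both qualify as principal solutions in the sense of Definition~\ref{d2.10}. Picking any nonprincipal companion $\hatt u_a^{(1)}$ of $u_a^{(1)}$ and expanding $u_a^{(2)}=c_1 u_a^{(1)}+c_2 \hatt u_a^{(1)}$, I assume for contradiction that $c_2\neq 0$. Then $W\big(u_a^{(1)},u_a^{(2)}\big)=-c_2\neq 0$, and the minimality property already established may be applied in both directions: taking $u_a^{(1)}$ as the principal solution and $u_a^{(2)}$ as competitor gives $u_a^{(1)}(\lam,x)/u_a^{(2)}(\lam,x)\to 0$ as $x\downarrow a$, while interchanging their roles gives $u_a^{(2)}(\lam,x)/u_a^{(1)}(\lam,x)\to 0$. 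Since these two ratios are reciprocals of one another, both cannot tend to zero, so $c_2=0$ and $u_a^{(2)}=c_1 u_a^{(1)}$. Real-valuedness and nonvanishing of both $u_a^{(1)}$ and $u_a^{(2)}$ near $a$ then force $c_1\in\bbR\setminus\{0\}$. The endpoint $b$ is handled identically.

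I do not anticipate a serious obstacle. The only mild subtlety is recognizing that in \eqref{2.21}--\eqref{2.22} the competitor $u(\lam,\dott)$ is allowed to be another principal-type solution, provided only that its Wronskian with $u_a$ (respectively $u_b$) is nonzero; once that is noted, the whole argument reduces to elementary manipulations with Wronskians and little-$\oh$ asymptotics, and the integral conditions \eqref{2.19}--\eqref{2.20} are not needed for this lemma.
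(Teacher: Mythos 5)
Your proof is correct. The paper itself states Lemma \ref{l2.11} without proof (the whole of Section \ref{s2} is a survey, with the arguments deferred to \cite{GLN20} and \cite[Ch.~13]{GZ21}), so there is no in-text proof to compare against; your argument is the standard one and is complete: writing $u=c_1u_a+c_2\hatt u_a$, noting $W(u_a,u)=-c_2$, and dividing through by the nonvanishing $\hatt u_a$ gives \eqref{2.21}, and the reciprocal-ratio contradiction then yields uniqueness up to real nonzero constants. Your closing remark is also accurate: only \eqref{2.17} (resp.\ \eqref{2.18}) together with the nonvanishing of $u_a,\hatt u_a$ near the endpoint is used, and the integral conditions \eqref{2.19}, \eqref{2.20} play no role in this lemma.
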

%%%%%%%%%%%%

Given these oscillation theoretic preparations, one can now revisit and complement Theorem \ref{t2.6} as follows: 

%%%%%%%%
\begin{theorem} \lb{t2.12}
Assume Hypothesis \ref{h2.1} and that $\tau$ is in the limit circle case at $a$ and $b$ $($i.e., $\tau$ is quasi-regular 
on $(a,b)$$)$. In addition, assume that $T_{min} \geq \lambda_0 I$ for some $\lambda_0 \in \bbR$, and denote by 
$u_a(\lambda_0, \dott)$ and $\hatt u_a(\lambda_0, \dott)$ $($resp., $u_b(\lambda_0, \dott)$ and 
$\hatt u_b(\lambda_0, \dott)$$)$ principal and nonprincipal solutions of $\tau u = \lambda_0 u$ at $a$ 
$($resp., $b$$)$, satisfying
\begin{equation}
W(\hatt u_a(\lambda_0,\dott), u_a(\lambda_0,\dott)) = W(\hatt u_b(\lambda_0,\dott), u_b(\lambda_0,\dott)) = 1.  
\lb{2.23} 
\end{equation}
Then the following items $(i)$--$(iii)$ hold: \\[1mm]  
$(i)$ Introducing $v_j \in \dom(T_{max})$, $j=1,2$, via 
\begin{align}
v_1(x) = \begin{cases} \hatt u_a(\lambda_0,x), & \text{for $x$ near a}, \\
\hatt u_b(\lambda_0,x), & \text{for $x$ near b},  \end{cases}   \quad 
v_2(x) = \begin{cases} u_a(\lambda_0,x), & \text{for $x$ near a}, \\
u_b(\lambda_0,x), & \text{for $x$ near b},  \end{cases}   \lb{2.24}
\end{align} 
one obtains for all $g \in \dom(T_{max})$, 
\begin{align}
\begin{split} 
\wti g(a) &= - W(v_2, g)(a) = \wti g_1(a) =  - W(u_a(\lambda_0,\dott), g)(a)    \\
&= \lim_{x \downarrow a} \f{g(x)}{\hatt u_a(\lambda_0,x)},    \\
\wti g(b) &= - W(v_2, g)(b) = \wti g_1(b) =  - W(u_b(\lambda_0,\dott), g)(b)   \\
&= \lim_{x \uparrow b} \f{g(x)}{\hatt u_b(\lambda_0,x)},    
\lb{2.25} 
\end{split} \\
\begin{split} 
{\wti g}^{\, \prime}(a) &= W(v_1, g)(a) = \wti g_2(a) = W(\hatt u_a(\lambda_0,\dott), g)(a)   \\
&= \lim_{x \downarrow a} \f{g(x) - \wti g(a) \hatt u_a(\lambda_0,x)}{u_a(\lambda_0,x)},    \\ 
{\wti g}^{\, \prime}(b) &= W(v_1, g)(b) = \wti g_2(b) = W(\hatt u_b(\lambda_0,\dott), g)(b)   \\ 
&= \lim_{x \uparrow b} \f{g(x) - \wti g(b) \hatt u_b(\lambda_0,x)}{u_b(\lambda_0,x)}.    \lb{2.26}
\end{split} 
\end{align}
In particular, the limits on the right-hand sides in \eqref{2.25}, \eqref{2.26} exist. \\[1mm] 
$(ii)$ All self-adjoint extensions $T_{\gamma,\delta}$ of $T_{min}$ with separated boundary conditions are of the form
\begin{align}
& T_{\gamma,\delta} f = \tau f, \quad \gamma,\delta \in[0,\pi),   \no \\
& f \in \dom(T_{\gamma,\delta})=\big\{g\in\dom(T_{max}) \, \big| \, \sin(\gamma) {\wti g}^{\, \prime}(a) 
+ \cos(\gamma) \wti g(a) = 0;   \lb{2.27} \\ 
& \hspace*{5.5cm} \,  \sin(\delta) {\wti g}^{\, \prime}(b) + \cos(\delta) \wti g(b) = 0 \big\}.    \no 
\end{align}
Moreover, $\sigma(T_{\gamma,\delta})$ is simple. \\[1mm]
$(iii)$ All self-adjoint extensions $T_{\varphi,R}$ of $T_{min}$ with coupled boundary conditions are of the type
\begin{align}
\begin{split} 
& T_{\varphi,R} f = \tau f,    \\
& f \in \dom(T_{\varphi,R})=\bigg\{g\in\dom(T_{max}) \, \bigg| \begin{pmatrix} \wti g(b) 
\\ {\wti g}^{\, \prime}(b) \end{pmatrix} = e^{i\varphi}R \begin{pmatrix}
\wti g(a) \\ {\wti g}^{\, \prime}(a) \end{pmatrix} \bigg\}, \lb{2.27a}
\end{split}
\end{align}
where $\varphi\in[0,2\pi)$, and $R \in SL(2,\bbR)$.
\end{theorem}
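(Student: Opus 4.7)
The plan is to establish item $(i)$ first, since items $(ii)$ and $(iii)$ then follow essentially verbatim from Theorem \ref{t2.6}. I would begin by verifying that $v_1, v_2$ in \eqref{2.24} can be realized as globally defined elements of $\dom(T_{max})$ that coincide with $\hatt u_a(\lambda_0, \dott)$, $u_a(\lambda_0, \dott)$ near $a$ and with $\hatt u_b(\lambda_0, \dott)$, $u_b(\lambda_0, \dott)$ near $b$; cutting off and smoothly interpolating on a compact sub-interval suffices, since $\tau$ is quasi-regular so all such solutions lie locally in $\Lr$. Real-valuedness of principal/nonprincipal solutions and the normalization \eqref{2.23} then ensure the hypotheses of Theorem \ref{t2.6}. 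With this choice, the first equalities in \eqref{2.25}, \eqref{2.26} (those expressing $\wti g(a)$ and $\wti g^{\,\prime}(a)$ as Wronskians) are immediate from \eqref{2.10}.

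The substantive step in $(i)$ is the passage from Wronskians to the one-sided limits asserted in \eqref{2.25}, \eqref{2.26}; I sketch the argument at $a$, the one at $b$ being symmetric. Writing $\tilde f := (\tau - \lambda_0) g$ and using the Lagrange identity $(d/dx) W(u, g) = -r u \tilde f$ valid for any solution $u$ of $(\tau - \lambda_0) u = 0$, the Cauchy--Schwarz inequality combined with $u_a(\lambda_0, \dott), \hatt u_a(\lambda_0, \dott) \in \Lr$ near $a$ (the limit circle hypothesis) and $\tilde f \in \Lr$ near $a$ produces finite limits of $W(u_a(\lambda_0, \dott), g)(x)$ and $W(\hatt u_a(\lambda_0, \dott), g)(x)$ as $x \downarrow a$. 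I would then define
\[
G(x) := g(x) + W(u_a(\lambda_0, \dott), g)(a)\, \hatt u_a(\lambda_0, x) - W(\hatt u_a(\lambda_0, \dott), g)(a)\, u_a(\lambda_0, x),
\]
so that $W(u_a(\lambda_0, \dott), G)(a) = W(\hatt u_a(\lambda_0, \dott), G)(a) = 0$ by \eqref{2.23}. Variation of parameters for $(\tau - \lambda_0) G = \tilde f$ normalized by these vanishing Wronskians at $a$ then gives the representation
\[
G(x) = \hatt u_a(\lambda_0, x) \! \int_a^x \! r(t) u_a(\lambda_0, t) \tilde f(t) \, dt - u_a(\lambda_0, x) \! \int_a^x \! r(t) \hatt u_a(\lambda_0, t) \tilde f(t) \, dt.
\]
Dividing the definition of $G$ by $\hatt u_a(\lambda_0, x)$ and invoking $u_a(\lambda_0, x) = \oh(\hatt u_a(\lambda_0, x))$ from \eqref{2.17} yields the first line of \eqref{2.25}; an analogous subtraction of $\wti g(a)\, \hatt u_a(\lambda_0, \dott)$ from $g$ and division by $u_a(\lambda_0, x)$ produces the first line of \eqref{2.26}.

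Parts $(ii)$ and $(iii)$ are then immediate: the boundary conditions in \eqref{2.27}, \eqref{2.27a} are literally those of \eqref{2.11}, \eqref{2.11a} rewritten in the $\wti g, \wti g^{\,\prime}$ notation provided by $(i)$, so the classification of self-adjoint extensions from Theorem \ref{t2.6} carries over without change. Simplicity of $\sigma(T_{\gamma, \delta})$ in the separated case $(ii)$ follows because any eigenfunction of $T_{\gamma, \delta}$ must satisfy a single homogeneous condition at $a$, which cuts the two-dimensional solution space of $(\tau - \lambda) u = 0$ down to a one-dimensional space.

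The main obstacle I anticipate is the rigorous verification that $G(x) = \oh(\hatt u_a(\lambda_0, x))$ as $x \downarrow a$. The Cauchy--Schwarz bounds $\big\lvert \int_a^x r u_a(\lambda_0, \dott) \tilde f \big\rvert \leq \|u_a(\lambda_0, \dott)\|_{L^2((a,x); rdx)} \|\tilde f\|_{L^2((a,x); rdx)} = \oh(1)$ and its companion with $\hatt u_a$ in place of $u_a$ handle the two integrals, after which the $\hatt u_a$ prefactor in the first integral term and the $u_a = \oh(\hatt u_a)$ estimate on the $u_a$ prefactor in the second term together deliver the required $\oh(\hatt u_a)$ bound. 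Modulo identifying the correct \emph{ansatz} for $G$, the remaining bookkeeping is then routine and essentially follows the pattern set out in \cite{GLN20}.
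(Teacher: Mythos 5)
Your overall strategy---realizing $v_1,v_2$ by patching principal and nonprincipal solutions, reducing $(ii)$, $(iii)$ to Theorem \ref{t2.6}, and extracting the limits in \eqref{2.25}, \eqref{2.26} from a variation-of-parameters representation of
$G = g + W(u_a(\lambda_0,\dott),g)(a)\,\hatt u_a(\lambda_0,\dott) - W(\hatt u_a(\lambda_0,\dott),g)(a)\,u_a(\lambda_0,\dott)$---is sound and is essentially the route taken in \cite{GLN20} (the paper itself states Theorem \ref{t2.12} without proof, deferring to \cite{GLN20} and \cite[Ch.~13]{GZ21}, so that source is the relevant comparison). The Lagrange identity, the convergence of the two integrals via Cauchy--Schwarz and the limit circle hypothesis, the identification
$G(x) = \hatt u_a(\lambda_0,x)\int_a^x r u_a \tilde f\,dt - u_a(\lambda_0,x)\int_a^x r\hatt u_a\tilde f\,dt$,
and the bound $G(x) = \oh(\hatt u_a(\lambda_0,x))$ that you single out as the main obstacle are all correct; this settles \eqref{2.25}, and your remarks on $(ii)$, $(iii)$ and on the simplicity of $\sigma(T_{\gamma,\delta})$ (via constancy of the Wronskian and \eqref{2.29}) are fine.

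There is, however, a genuine gap in your treatment of \eqref{2.26}. Since $g - \wti g(a)\,\hatt u_a(\lambda_0,\dott) = G + W(\hatt u_a(\lambda_0,\dott),g)(a)\,u_a(\lambda_0,\dott)$, the limit in \eqref{2.26} requires $G(x) = \oh(u_a(\lambda_0,x))$, which is strictly stronger than the $\oh(\hatt u_a(\lambda_0,x))$ bound you establish, precisely because $u_a(\lambda_0,x) = \oh(\hatt u_a(\lambda_0,x))$. The problematic term is the first one in the representation of $G$: the crude estimate $\big|\int_a^x r u_a\tilde f\,dt\big| = \oh(1)$ leaves you with $\hatt u_a(\lambda_0,x)\,\oh(1)$, which after division by $u_a(\lambda_0,x)$ blows up. What is actually needed is $\hatt u_a(\lambda_0,x)\big(\int_a^x r u_a^2\,dt\big)^{1/2} = \oh(u_a(\lambda_0,x))$, and this requires one more use of the limit circle hypothesis: writing $\hatt u_a = u_a\,\rho$ with $\rho(x) = \int_x^c dt\, p(t)^{-1}u_a(\lambda_0,t)^{-2}$ (up to an irrelevant additive multiple of $u_a$, harmless since $\rho(x)\to\infty$ as $x\downarrow a$), the monotonicity of $\rho$ yields
\begin{equation*}
\hatt u_a(\lambda_0,x)^2\int_a^x r u_a^2\,dt = u_a(\lambda_0,x)^2\,\rho(x)^2\int_a^x r u_a^2\,dt
\le u_a(\lambda_0,x)^2\int_a^x r u_a^2\rho^2\,dt = u_a(\lambda_0,x)^2\,\oh(1),
\end{equation*}
the last step because $\int_a^c r\,\hatt u_a(\lambda_0,t)^2\,dt < \infty$ by the limit circle assumption at $a$. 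Without this (or an equivalent) refinement, the division by $u_a(\lambda_0,x)$ in \eqref{2.26} is unjustified; with it, your argument closes.
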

%%%%%%%%

Moreover, under the hypotheses of Theorem \ref{t2.12}, relation \eqref{2.5} implies that the minimal operator takes on the form
\begin{align}
\begin{split} 
& T_{min} f = \tau f, \\
& f \in \dom(T_{min})= \big\{g\in\dom(T_{max})  \, \big| \, \wti g(a) = {\wti g}^{\, \prime}(a) =0 
= \wti g(b) = {\wti g}^{\, \prime}(b)\big\}.      \lb{2.27b} 
\end{split}
\end{align}

The Friedrichs extension $T_F$ of $T_{min}$ now permits a particularly simple characterization in terms of the generalized boundary values $\wti g(a), \wti g(b)$ as derived by Kalf  \cite{Ka78} and subsequently by Niessen and Zettl \cite{NZ92} (see also \cite{Re51}, \cite{Ro85} and the extensive literature cited in \cite{GLN20}, 
\cite[Ch.~13]{GZ21}):

%%%%%%%%
\begin{theorem} \lb{t2.13}
Assume Hypothesis \ref{h2.1} and that $\tau$ is in the limit circle case at $a$ and $b$ $($i.e., $\tau$ 
is quasi-regular on $(a,b)$$)$. In addition, assume that $T_{min} \geq \lambda_0 I$ for some $\lambda_0 \in \bbR$. Then the Friedrichs extension $T_F$ of $T_{min}$ is characterized by
\begin{equation}
T_F f = \tau f, \quad f \in \dom(T_F)= \big\{g\in\dom(T_{max})  \, \big| \, \wti g(a) = 0 = \wti g(b)\big\}.    \lb{2.28}
\end{equation}
In particular, $T_F = T_{0,0}$.
\end{theorem}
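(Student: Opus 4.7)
The plan is to identify the operator $T_{0,0}$, obtained from Theorem~\ref{t2.12}(ii) by taking $\gamma=\delta=0$, with the Friedrichs extension $T_F$. Since $T_{0,0}$ is already a self-adjoint extension of $T_{min}$ by Theorem~\ref{t2.12}(ii), the proof splits into two parts: (a) verifying $T_{0,0}\geq\lambda_0 I$, and (b) showing that $T_{0,0}$ is the \emph{largest} nonnegative self-adjoint extension of $T_{min}-\lambda_0 I$, equivalently that $\dom(T_{0,0})$ is contained in the form closure $\mathfrak{d}$ of $\dom(T_{min})$ under the form of $T_{min}-\lambda_0 I + I$.

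For (a), I would exploit the Jacobi-type factorization available from a principal solution $u_a := u_a(\lambda_0,\cdot)$ at $\lambda_0$. On a neighborhood $(a,c]$ where $u_a>0$, setting $h_a := g/u_a$ for $g\in\dom(T_{0,0})$, one has the identity
\begin{equation*}
(\tau-\lambda_0)(u_a h_a) = -\frac{1}{r\, u_a}\,\frac{d}{dx}\bigl[p\, u_a^2\, h_a'\bigr],
\end{equation*}
which upon integration by parts on $[\epsilon,c]$ yields
\begin{equation*}
\int_\epsilon^c \overline{g}\,(\tau-\lambda_0)g \cdot r\, dx = \int_\epsilon^c p\, u_a^2 |h_a'|^2\, dx - \bigl[(\overline{g}/u_a)\, W(u_a,g)\bigr]_\epsilon^c.
\end{equation*}
Writing $g = c_1 u_a + c_2 \hatt u_a$ near $a$ by variation of parameters with $W(\hatt u_a, u_a)=1$, the condition $\wti g(a)=0$ translates into $c_2(a)=0$, so that $W(u_a,g)(\epsilon) = -c_2(\epsilon)\to 0$ as $\epsilon\downarrow a$. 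A delicate analysis using the representation $c_2(x) = \int_a^x u_a r (\tau-\lambda_0)g\, dt$ together with \eqref{2.19} and \eqref{2.20} then shows that $\lim_{\epsilon\downarrow a}(\overline{g}/u_a)(\epsilon)\, W(u_a,g)(\epsilon) = 0$ and $\int_a^c p u_a^2 |h_a'|^2\, dx < \infty$. Combined with the symmetric argument at $b$, this yields $T_{0,0}\geq\lambda_0 I$.

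For (b), I would invoke the abstract characterization of the Friedrichs extension as the unique self-adjoint extension whose operator domain lies in the form closure $\mathfrak{d}$, and establish $\dom(T_{0,0})\subset\mathfrak{d}$ by smoothly truncating: for $g\in\dom(T_{0,0})$, the sequence $g_n = \chi_n g$ with $\chi_n\in C_c^\infty((a,b))$, $\chi_n\to 1$ pointwise and $|\chi_n'|$ appropriately controlled, approximates $g$ in the form norm precisely because the boundary decay forced by $\wti g(a)=\wti g(b)=0$ kills the cut-off cross terms. The principal technical obstacle throughout is this boundary analysis, which requires quantifying the decay of $g$ against both principal and nonprincipal solutions. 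As a shortcut one may appeal directly to the characterization of Kalf \cite{Ka78} and Niessen--Zettl \cite{NZ92}, which asserts that $\dom(T_F)$ consists of those $g\in\dom(T_{max})$ with $g(x)/\hatt u_a(\lambda_0,x)\to 0$ as $x\downarrow a$ and $g(x)/\hatt u_b(\lambda_0,x)\to 0$ as $x\uparrow b$; by \eqref{2.25} these are precisely $\wti g(a)=\wti g(b)=0$, so that $T_F = T_{0,0}$, yielding \eqref{2.28}.
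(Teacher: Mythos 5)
First, a point of comparison: the paper does not actually prove Theorem \ref{t2.13}. It is presented as a survey result, attributed to Kalf \cite{Ka78} and Niessen--Zettl \cite{NZ92}, with the section closing by noting that all of its results are taken from \cite{GLN20} and \cite[Ch.~13]{GZ21}. Your closing ``shortcut'' --- observing that by \eqref{2.25} the Kalf/Niessen--Zettl condition $g(x)/\hatt u_a(\lambda_0,x)\to 0$ as $x \downarrow a$ (and its analogue at $b$) is literally $\wti g(a)=0=\wti g(b)$ --- is therefore not a shortcut but precisely the paper's own route, and it is complete at the level of rigor the paper adopts.

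Your self-contained alternative is a correct outline of the standard proof underlying those references, and its algebra checks out: $(\tau-\lambda_0)(u_a h_a)=-(r u_a)^{-1}\bigl(p\,u_a^2 h_a'\bigr)'$ is the right factorization, the boundary term is indeed $-\bigl[(\overline g/u_a)W(u_a,g)\bigr]$ since $p\,u_a^2 h_a'=W(u_a,g)$, and $\wti g(a)=c_2(a)$ because $u_a=o(\hatt u_a)$. The Freudenthal-type characterization of $T_F$ as the unique self-adjoint extension whose operator domain lies in the form closure $\mathfrak{d}$ is the correct abstract input for part (b); note that once (b) is established, (a) follows for free from $T_F\geq\lambda_0 I$, so the two halves are not logically independent. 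The caveat is that the two claims carrying the entire analytic weight --- that $(\overline g/u_a)(\epsilon)\,W(u_a,g)(\epsilon)\to 0$, and that the cut-offs $\chi_n g$ converge in form norm --- are asserted rather than proved. For the first, $g/u_a$ generically blows up like $\hatt u_a/u_a$, so one must show $|c_2(x)|^2\,\hatt u_a(x)/u_a(x)\to 0$; the Cauchy--Schwarz bound $|c_2(x)|\leq \|u_a\|_{L^2((a,x);rdx)}\|(\tau-\lambda_0)g\|_{L^2((a,x);rdx)}$ is the right start, but comparing $\|u_a\|^2_{L^2((a,x);rdx)}$ against $u_a(x)/\hatt u_a(x)$ requires the quantitative relation $\hatt u_a(x)\sim u_a(x)\int_x^c p^{-1}u_a^{-2}\,dt$ flowing from \eqref{2.19}--\eqref{2.20}, and this estimate is exactly where \cite{Ka78} and \cite{NZ92} spend their effort. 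Either carry that out or rest on the citation; as written, the hybrid leaves the hard steps open while the citation already finishes the job.
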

%%%%%%%%

%%%%%%%%
\begin{remark} \lb{r2.14}
$(i)$ As in \eqref{2.15}, one readily verifies for $g, h \in \dom(T_{max})$,
\begin{equation}
\wti g(d) {\wti h}^{\, \prime}(d) - {\wti g}^{\, \prime}(d) \wti h(d) = W(g,h)(d), \quad d \in \{a,b\},    \lb{2.29} 
\end{equation} 
again interpreted in the sense that either side in \eqref{2.29} has a finite limit as $d \downarrow a$ and 
$d \uparrow b$. In particular, if $\tau$ is regular at an endpoint then Remark \ref{r2.7}\,$(ii)$ shows that the generalized boundary values in \eqref{2.25}, \eqref{2.26} reduce to the canonical ones in \eqref{2.14}. \\[1mm] 
$(ii)$ While the principal solution at an endpoint is unique up to constant multiples (which we will ignore), nonprincipal solutions differ by additive constant multiples of the principal solution. As a result, if 
\begin{align}
\begin{split} 
& \hatt u_a (\lambda_0, \dott) \longrightarrow \hatt u_a (\lambda_0, \dott) + C u_a(\lambda_0,\dott), \quad C \in \bbR,    \\
& \quad \text{then } \, \wti g(a) \longrightarrow \wti g(a), 
\quad {\wti g}^{\, \prime}(a) \longrightarrow {\wti g}^{\, \prime}(a) - C \wti g(a),    
\end{split} 
\end{align}
and analogously at the endpoint $b$. Hence, generalized boundary values ${\wti g}^{\, \prime}(d)$ at the endpoint 
$d \in \{a,b\}$ depend on the choice of nonprincipal solution $\hatt u_{d}(\lambda_0, \dott)$ of $\tau u = \lambda_0 u$ at $d$. However, the Friedrichs boundary conditions $\wti g(a) = 0 = \wti g(b)$ are clearly independent of the choice of 
nonprincipal solution. \\[1mm]
$(iii)$ As always in this context, if $\tau$ is in the limit point case at one or both interval endpoints, the corresponding boundary conditions at that endpoint are dropped and only a separated boundary condition at the other end point (if the latter is a limit circle endpoint for $\tau$) has to be imposed in Theorems \ref{t2.12} and \ref{t2.13}. In other words, the generalized boundary values \eqref{2.10} and \eqref{2.25}, \eqref{2.26} are only relevant if the endpoint in question is of the  limit circle type. In the case where $\tau$ is in the limit point case at both endpoints, all boundary values and boundary conditions become superfluous as in this case $T_{min} = T_{max}$ is self-adjoint. 
\hfill $\diamond$
\end{remark}
%%%%%%%%

All results surveyed in this section can be found in \cite{GLN20} and \cite[Ch.~13]{GZ21} which contain very detailed 
lists of references to the basics of Weyl--Titchmarsh theory. Here we just mention a few additional and classical 
sources such as \cite[Sect.~129]{AG81}, \cite[Chs.~8, 9]{CL85}, \cite[Sects.~13.6, 13.9, 13.10]{DS88}, 
\cite[Ch.~III]{JR76}, \cite[Ch.~V]{Na68}, \cite{NZ92}, \cite[Ch.~6]{Pe88}, \cite[Ch.~9]{Te14}, \cite[Sect.~8.3]{We80}, \cite[Ch.~13]{We03}, \cite[Chs.~4, 6--8]{Ze05}.

%%%%%%%%%%%%%%%%%%%%%%%%%%%%%%
%%%%%%%%%%%%%%%%%%%%%%%%%%%%%%
\section{The Krein--von Neumann extension of $T_{min} > 0$} \lb{s3}
%%%%%%%%%%%%%%%%%%%%%%%%%%%%%%
%%%%%%%%%%%%%%%%%%%%%%%%%%%%%%

In this section we derive the Krein--von Neumann extension for $T_{min}$ under the assumption 
$T_{min} \geq \varepsilon I$ for some $\varepsilon > 0$. We continue with some more abstract facts on the 
Krein--von Neumann extension of strictly positive symmetric operators in a complex Hilbert space and refer to 
\cite[Sect.\ 109]{AG81}, \cite{AS80}, \cite{AN70}, \cite{AHSD01}, \cite{AT05}, \cite{AT09}, \cite{AGMT10}, \cite{AGMST10},  \cite{AGMST13}, \cite{AGLMS17}, \cite[Sect.~5.4]{BHS20}, 
\cite{Bi56}, \cite{DM91}, \cite{DM95}, \cite[Part III]{Fa75}, \cite{Gr83}, \cite[Sect.\ 13.2]{Gr09},  \cite{HMD04}, \cite{HSDW07}, \cite{Kr47}, \cite{Kr47a}, 
\cite{Ne83}, \cite{PS96}, \cite{SS03}, \cite{Si98}, \cite{Sk79}, \cite{St96}, \cite{St73}, \cite{Ts81}, \cite{Vi63}, \cite{vN29}, 
and the references cited therein for some of the basic literature in this context. 

Denote by 
\begin{equation}
n_{\pm}(T) = \dim(\ker(T^* \mp i I_{\cH})) = \dim\big(\ran(T\pm i I_{\cH})^{\bot}\big) \in \bbN_0 \cup \{\infty\}, 
\end{equation}
the deficiency indices of a densely defined, closed, symmetric operator $T$ in the complex, separable Hilbert space $\cH$. 

The Krein--von Neumann and Friedrichs extension of a densely defined, closed, symmetric operator $S$ with 
$n_{\pm}(S) > 0$, satisfying  
\begin{equation} 
S \geq 0,  
\end{equation} 
are denoted by $S_K$ and $S_F$, respectively. If, in addition, 
\begin{equation} 
S \geq \varepsilon I_{\cH}    \lb{3.3}
\end{equation} 
for some $\varepsilon>0$, then one also has 
\begin{align} 
& n_{\pm}(S) = \dim(\ker(S^*)),    \lb{3.4} \\
& S_F  \geq \varepsilon I_{\cH},       \lb{3.4a}
\end{align} 
and 
\begin{align}\lb{3.5}
\begin{split} 
& \dom(S_K)=\dom(S) \dotplus \ker(S^*),      \\
& \, S_K f = S^* f, \quad f \in \dom(S_K).   
\end{split} 
\end{align}
For completeness we also recall that under hypothesis \eqref{3.3} 
\begin{align}
& \dom(S_F) = \dom(S) \dotplus (S_F)^{-1} \ker(S^*), \quad S_F f = S^* f, \quad f \in \dom(S_F),   \\
& \dom(S^*) = \dom(S) \dotplus (S_F)^{-1} \ker(S^*) \dotplus \ker(S^*),   \\
& \ker(S_K) = \ker(S^*).     \lb{3.8}
\end{align}

Here the notation $\dotplus$ addresses the direct (not orthogonal direct) sum in the sense that if $X_1,\ X_2$ are linear subspaces of a Banach space $X$, then $X_1\dotplus X_2$ denotes the subspace of $X$ given by
\begin{align}
X_1\dotplus X_2=\{x\in X| x=x_1+x_2,\ x_j\in X_j,\ j=1,2\},
\end{align}
assuming
\begin{align}
X_1\cap X_2=\{0\}.
\end{align}

%%%%%%%%%%%%%%
\begin{remark} \lb{r3.1} 
If $S\geq \varepsilon I_{\cH},\ \varepsilon>0$, then $\dom(S)\dotplus\ker(S^*)$ is well-defined since
\begin{align}
f\in\dom(S)\cap\ker(S^*)
\end{align}
implies $0=S^* f=Sf$, and hence $f=0$ as $S\geq\varepsilon I_{\cH},\ \varepsilon>0$. \hfill $\diamond$
\end{remark}
%%%%%%%%%%%%%%

%%%%%%%%%%%%%%
\begin{lemma}\lb{l3.2}
Suppose $S$ is densely defined, symmetric, and for some $\varepsilon > 0$, $S \geq \varepsilon I_{\cH}$. If 
$\wti S$ is a self-adjoint extension of $S$ such that $\dom\big(\wti S\big)\supset \ker(S^*)$, then $\wti S=S_K$.
\end{lemma}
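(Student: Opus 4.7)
The plan is to exploit the explicit description \eqref{3.5} of the Krein--von Neumann extension, namely $\dom(S_K) = \dom(S) \dotplus \ker(S^*)$ with $S_K = S^*\big|_{\dom(S_K)}$, and show directly that $\wti S$ must contain $S_K$. Since self-adjoint operators are maximally symmetric, any such inclusion is automatically an equality.

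First, I would observe that because $\wti S$ is a self-adjoint extension of $S$, one has the chain
\begin{equation}
S \subset \wti S = \big(\wti S\big)^* \subset S^*,
\end{equation}
where the last inclusion follows from the general fact that $A \subset B$ implies $B^* \subset A^*$. Consequently, for every $h \in \dom\big(\wti S\big)$ one has $\wti S h = S^* h$.

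Next, I would combine the two sources of elements in $\dom\big(\wti S\big)$: on the one hand $\dom(S) \subset \dom\big(\wti S\big)$ since $\wti S$ extends $S$, and on the other hand $\ker(S^*) \subset \dom\big(\wti S\big)$ by hypothesis. By linearity,
\begin{equation}
\dom(S_K) = \dom(S) \dotplus \ker(S^*) \subset \dom\big(\wti S\big),
\end{equation}
where the direct sum makes sense by Remark \ref{r3.1} (using $S \geq \varepsilon I_\cH$). For any $f = f_0 + f_1$ with $f_0 \in \dom(S)$ and $f_1 \in \ker(S^*)$, the identity $\wti S = S^*\big|_{\dom(\wti S)}$ and $S_K = S^*\big|_{\dom(S_K)}$ together with $\dom(S_K) \subset \dom(\wti S)$ yield
\begin{equation}
\wti S f = S^* f = S_K f,
\end{equation}
so $S_K \subset \wti S$.

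Finally, since both $S_K$ and $\wti S$ are self-adjoint, and a self-adjoint operator admits no proper symmetric (let alone self-adjoint) extension, the inclusion $S_K \subset \wti S$ forces $S_K = \wti S$. The argument is essentially bookkeeping once \eqref{3.5} is available; the only subtlety worth flagging is that the characterization \eqref{3.5}, and hence the whole argument, relies on strict positivity $S \geq \varepsilon I_\cH$ (to ensure $\dom(S) \cap \ker(S^*) = \{0\}$ and that $S_K$ acts as $S^*$ on $\ker(S^*)$), so the hypothesis $\varepsilon > 0$ is used in an essential way.
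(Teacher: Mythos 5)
Your proposal is correct and follows essentially the same route as the paper's own proof: establish $\dom(S_K)=\dom(S)\dotplus\ker(S^*)\subseteq\dom\big(\wti S\big)$, note that both operators act as restrictions of $S^*$ so that $S_K\subseteq\wti S$, and conclude by maximality of self-adjoint operators. Your explicit justification of $\wti S\subset S^*$ and the closing remark on where $\varepsilon>0$ enters are welcome clarifications but do not change the argument.
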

%%%%%%%%%%%%%%
\begin{proof}
Without loss of generality we may assume that $S$ is closed. By definition, $\dom\big(\wti S\big)\supset \dom(S)$, and by assumption, $\dom\big(\wti S\big)\supset \ker(S^*)$, hence 
$\dom\big(\wti S\big)\supseteq \dom(S)\dotplus\ker(S^*)=\dom(S_K)$. Utilizing the facts $\wti S = S^*\big|_{\dom(\wti S)}$ and $S_K = S^*\big|_{\dom(S_K)}$, one infers that
\begin{equation}
\wti S\big|_{\dom(S_K)} = S^*\big|_{\dom(\wti S) \cap \dom(S_K)} = S^*\big|_{\dom(S_K)} = S_K, 
\end{equation}
that is, $\wti S \supseteq S_K$. Since self-adjoint operators are maximal in the sense that $S_K$ has no proper symmetric extensions, one concludes that $\wti S=S_K$. 
\end{proof}
%%%%%%%%%%%%%%

%%%%%%%%%%%%%%
\begin{remark} \lb{r3.3} 
$(i)$ From the outset, \eqref{3.5} implies \eqref{3.8}, that is, $\ker(S_K)=\ker(S^*)$.  
Lemma \ref{l3.2} now implies a converse in the sense that if $\ker(\wti S)=\ker(S^*)$ (and hence lies in 
$\dom\big(\wti S\big)$), then $\wti S=S_K$. One notes that Lemma \ref{l3.2} does not {\it a priori} assume that $\wti S$ is bounded from below. \\[1mm] 
$(ii)$ The fact \eqref{3.8} has been isolated in \cite{AS80} as uniquely identifying the Krein--von Neumann extension (under the hypothesis $S \geq \varepsilon I_{\cH}$) (see also \cite[eq.~(2.39)]{AGMT10}), as a byproduct of an entirely different quadratic form approach that characterizes all nonnegative self-adjoint extensions of $S$. Here we derive this uniqueness aspect with entirely elementary means only. \\[1mm]  
$(iii)$ Applications to $2m$th order regular differential operators: Suppose 
$T_{min}=\overline{\tau_{2m}\big|_{C_0^\infty ((a,b))}}$, then $T^*_{min}=T_{max}$, that is, no boundary conditions are necessary in $\dom(T_{max})$. Then $\wti T$, a self-adjoint extension of $T_{min}$ in $L^2((a,b);rdx)$ equals $T_{min,K}$, the Krein extension of $T_{min}$ in $L^2((a,b);rdx)$, if and only if
\begin{align}\lb{3.13}
\dim\big(\ker\big(\wti T\big)\big)=2m.
\end{align}
Of course, $\dim(\ker(T_{max}))=2m$ since we assumed $T_{min}$ to be regular. \\[1mm] 
$(iv)$ Relation \eqref{3.13} only holds if $T_{min}$ is indeed minimally defined, that is, as the closure of $\tau_{2m}\big|_{C_0^\infty ((a,b))}$. If some of the possible zero boundary conditions are missing in $\dom(T_{min})$ then they will reappear in $\dom(T_{max} = T_{min}^*)$ and hence $2m$ in \eqref{3.13} has to be diminished accordingly. \\[1mm] 
$(v)$ The $2m$ solutions giving rise to \eqref{3.13} (in the regular case) are simply generated by solving the ordinary differential equation of $2m$th order
\begin{align} \lb{3.14}
\tau_{2m} y=0
\end{align}
in the distributional sense. \hfill$\diamond$
\end{remark}
%%%%%%%%%%%%%%

To demonstrate that $\varepsilon>0$ is necessary for Lemma \ref{l3.2} to hold, we recall the following (counter) example.

%%%%%%%%%%%%%%
\begin{example} \lb{e3.4} 
Let $\cH=L^2((0,\infty);dx)$, and
\begin{align}
& T_{min}^{(0)} f = - f'',   \no \\
& f \in \dom\big(T_{min}^{(0)}\big) = \big\{g\in L^2((0,\infty);dx) \, \big| \, \text{for all $R>0$:} \, g,g' \in AC([0,R]);   \\ 
& \hspace*{5.35cm} g(0)=g'(0)=0; \, g''\in L^2((0,\infty);dx)\big\},  \no \\
&T_{min}^{(0)}=\overline{-d^2/dx^2\big|_{C_0^\infty ((0,\infty))}}.
\end{align}
Then
\begin{equation}
T_{min}^{(0)} \geq 0
\end{equation}
but there is no $\varepsilon > 0$ such that $T_{min}^{(0)} \geq \varepsilon I$. Moreover, 
\begin{align}
& T_{max}^{(0)} f =\big(T_{min}^{(0)}\big)^* f =- f'',   \no \\
& f \in \dom\big(T_{max}^{(0)}\big) = \big\{g\in L^2((0,\infty);dx) \, \big| \, \text{for all $R>0$:} \, g,g' \in AC([0,R]);    \\
& \hspace*{8.15cm} g''\in L^2((0,\infty);dx)\big\},  \no 
\end{align}
and
\begin{align} \lb{3.19}
\begin{split} 
& T_{min,K}^{(0)} f =T_N^{(0)} f =- f'',   \\
& f \in \dom\big(T_{min,K}^{(0)} = T_N^{(0)}\big) = \big\{g\in\dom\big(T_{max}^{(0)}\big) \, \big| \, g'(0)=0\big\}.
\end{split} 
\end{align}
Furthermore, consider self-adjoint extensions $T_{\a}^{(0)}$ of $T_{min}^{(0)}$ given by 
\begin{align}
\begin{split} 
& T_{\a}^{(0)} f = - f'',\quad \a\in[0,\infty)\cup\{\infty\},   \\
& f \in \dom\big(T_{\a}^{(0)}\big) = \big\{g\in\dom\big(T_{max}^{(0)}\big) \, \big| \, g'(0)=\a g(0)\big\}.  
\end{split}
\end{align}
Then 
\begin{equation} 
\ker\big(T_{\a}^{(0)}\big)=\ker\big(T_{min,K}^{(0)}=T_N^{(0)} \equiv T_{\a=0}^{(0)}\big) 
= \ker\big(\big(T_{min}^{(0)}\big)^* = T_{max}^{(0)}\big) = \{0\},
\end{equation}
and 
\begin{equation} 
\sigma\big(T_{\a}^{(0)}\big) = [0,\infty),  \quad \a \in [0,\infty) \cup \{\infty\}. 
\end{equation} 
Thus, Lemma \ref{l3.2} requires $\varepsilon>0$. For the fact \eqref{3.19} see, for example, \cite[Corollary 5.6]{GKMT01} $($choose $\gamma=\pi/2,\ q(x)=0$, and note that $m_0^W(z)=iz^{1/2}$, hence $m_0^W(0)=0$$)$.

Of course,
\begin{align}
\begin{split} 
& T_{\a=\infty}^{(0)} f = T_{min,F}^{(0)} f = T_D^{(0)} f = - f'',    \\
& f \in \dom\big(T_D^{(0)}\big) = \big\{g\in\dom\big(T_{max}^{(0)}\big) \, \big| \, g(0)=0\big\},     \lb{3.23} 
\end{split} 
\end{align}
represents the Friedrichs $($resp., Dirichlet\,$)$ extension of $T_{min}^{(0)}$. 
\end{example}
%%%%%%%%%%%%%%

Combining \cite{CGNZ14} and \cite{GLN20} one can now extend the description of the Krein extension from the 
known regular case to the singular case as follows:

%%%%%%
\begin{theorem} \lb{t3.5}
In addition to Hypothesis \ref{h2.1}, suppose that $T_{min} \geq \varepsilon I$ for some 
$\varepsilon > 0$. Then the following items $(i)$ and $(ii)$ hold: \\[1mm]
$(i)$ Assume that $n_{\pm}(T_{min}) = 1$ and denote the principal solutions of $\tau u = 0$ at $a$ and $b$ by $u_a(0,\dott)$ and $u_b(0,\dott)$, respectively. If $\tau$ is in the limit circle case at $a$ and in the limit point case at $b$, then the Krein--von Neumann extension $T_{\ga_K}$ of $T_{min}$ is given by 
\begin{align}
& T_{\ga_K}f = \tau f,   \no \\
& f \in \dom(T_{\ga_K})=\big\{g\in\dom(T_{max}) \, \big| \, \sin(\gamma_K) {\wti g}^{\, \prime}(a) 
+ \cos(\gamma_K) \wti g(a) = 0\big\},     \lb{3.24} \\
& \cot(\ga_K) = - \wti u_b^{\, \prime}(0,a) / \wti u_b(0,a), \quad \ga_K \in (0,\pi).   \no 
\end{align}
Similarly, if $\tau$ is in the limit circle case at $b$ and in the limit point case at $a$, then the Krein--von Neumann extension $T_{\de_K}$ of $T_{min}$ is given by 
\begin{align}
& T_{\de_K}f = \tau f,   \no \\
& f \in \dom(T_{\de_K})=\big\{g\in\dom(T_{max}) \, \big| \, \sin(\de_K) {\wti g}^{\, \prime}(b) 
+ \cos(\de_K) \wti g(b) = 0\big\},     \lb{3.25} \\
& \cot(\de_K) = - \wti u_a^{\, \prime}(0,b) / \wti u_a(0,b), \quad \de_K \in (0,\pi).   \no 
\end{align}
$(ii)$ Assume that $n_{\pm}(T_{min}) = 2$, that is, $\tau$ is in the limit circle case at $a$ and $b$. Then, introducing a 
basis for $\ker(T_{max})$, denoted by $u_1(0,\dott),\ u_2(0,\dott)$ as follows, 
\begin{align}
& \tau u_j(0,\dott) = 0, \quad j =1,2,   \no \\
& \wti u_1(0,a) = 0, \quad \wti u_1(0,b)=1,    \lb{3.26} \\
& \wti u_2(0,a) = 1, \quad \wti u_2(0,b)=0,    \no 
\end{align}
the Krein--von Neumann extension $T_{0,R_K}$ of $T_{min}$ is given by 
\begin{align}
\begin{split} 
& T_{0,R_K} f = \tau f,    \\
& f \in \dom(T_{0,R_K})=\bigg\{g\in\dom(T_{max}) \, \bigg| \begin{pmatrix} \wti g(b) 
\\ {\wti g}^{\, \prime}(b) \end{pmatrix} = R_K \begin{pmatrix}
\wti g(a) \\ {\wti g}^{\, \prime}(a) \end{pmatrix} \bigg\},      \lb{3.27}
\end{split}
\end{align}
where 
\begin{align}
R_K=\dfrac{1}{\wti u_1^{\, \prime}(0,a)}\begin{pmatrix} - \wti u_2^{\, \prime}(0,a) & 1\\
\wti u_1^{\, \prime}(0,a) \wti u_2^{\, \prime}(0,b) - \wti u_1^{\, \prime}(0,b) \wti u_2^{\, \prime}(0,a) & \wti u_1^{\, \prime}(0,b)
\end{pmatrix}.    \lb{3.28}
\end{align}
Alternatively, employing the nonprincipal solutions  $\hatt u_a(0,\dott)$ and $\hatt u_b(0,\dott)$ of $\tau u=0$ at $a$ and $b$, respectively, satisfying \eqref{2.23} and used to introduce the generalized boundary values \eqref{2.25}, \eqref{2.26}, $R_K$ can be characterized in terms of principal and nonprincipal solutions by 
\begin{align}
R_K=\begin{pmatrix} \wti{\hatt u}_{a}(0,b) & \wti u_{a}(0,b)\\
\wti{\hatt u}_{a}^{\, \prime}(0,b) & \wti u_{a}^{\, \prime}(0,b)
\end{pmatrix},  \lb{3.29}
\end{align}
equivalently, by 
\begin{align}
R_K=\begin{pmatrix} \wti u^{\, \prime}_b(0,a) & -\wti u_b(0,a)\\
-\wti{\hatt u}_b^{\, \prime}(0,a) & \wti{\hatt u}_b(0,a)
\end{pmatrix}.   \lb{3.30}
\end{align}
\end{theorem}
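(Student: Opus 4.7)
The plan is to invoke Lemma \ref{l3.2}, which characterizes the Krein--von Neumann extension $T_K$ as the unique self-adjoint extension of $T_{min}$ whose domain contains $\ker(T_{max})$. Since every operator listed in \eqref{3.24}, \eqref{3.25}, \eqref{3.27} is already a self-adjoint extension of $T_{min}$ by Theorem \ref{t2.12}, the task in each case reduces to identifying $\ker(T_{max})$ and the generalized boundary values of its elements, then checking that these elements satisfy the displayed boundary conditions.

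For part $(i)$, the hypotheses together with \eqref{3.4} and Theorem \ref{t2.5} give $\dim(\ker(T_{max})) = n_{\pm}(T_{min}) = 1$. Because $0$ lies strictly below the operator bound $\varepsilon$ of $T_{min}$, the unique (up to scalars) $L^2$-near-$b$ solution of $\tau u = 0$ at the LP endpoint $b$ is the principal solution $u_b(0, \dott)$ (cf.\ \cite{CGNZ14}, \cite{GLN20}), so $\ker(T_{max}) = \linspan\{u_b(0, \dott)\}$. One first checks that $\wti u_b(0, a) \neq 0$: otherwise $u_b(0, \dott) \in \dom(T_F)$ by Theorem \ref{t2.13} (the Friedrichs boundary condition at the LP endpoint being vacuous), forcing $u_b(0, \dott) \in \ker(T_F) = \{0\}$ since $T_F \geq \varepsilon I$, a contradiction. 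Then $\ga_K \in (0, \pi)$ is uniquely defined by the displayed cotangent formula so that the separated boundary condition at $a$ annihilates $u_b(0, \dott)$, and Lemma \ref{l3.2} closes the argument. The case LC at $b$ / LP at $a$ is entirely symmetric.

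For part $(ii)$, $\dim(\ker(T_{max})) = 2$, and a basis $\{u_1(0, \dott), u_2(0, \dott)\}$ of the form \eqref{3.26} exists because the map $\ker(T_{max}) \ni u \mapsto (\wti u(a), \wti u(b)) \in \bbR^2$ is injective (by the same $\ker(T_F) = \{0\}$ argument as above) and hence surjective. I would impose the coupled boundary condition \eqref{3.27}, with $e^{i\varphi} = 1$ admissible since $R_K$ and both basis vectors can be taken real, on each of $u_1(0, \dott)$ and $u_2(0, \dott)$; this yields four scalar equations for the four entries of $R_K$. Solvability requires $\wti u_1^{\, \prime}(0, a) \neq 0$, which follows from the linear independence of $u_1(0,\dott), u_2(0,\dott)$ via $W(u_1(0,\dott), u_2(0,\dott))(a) = -\wti u_1^{\, \prime}(0, a)$, a direct consequence of \eqref{2.29} and \eqref{3.26}. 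The unique solution is \eqref{3.28}, and $R_K \in SL(2, \bbR)$ is verified by direct expansion after invoking constancy of the Wronskian for solutions, namely
\begin{equation*}
-\wti u_1^{\, \prime}(0, a) = W(u_1(0,\dott), u_2(0,\dott))(a) = W(u_1(0,\dott), u_2(0,\dott))(b) = \wti u_2^{\, \prime}(0, b),
\end{equation*}
which collapses the four-term numerator of $\det R_K$ to $[\wti u_1^{\, \prime}(0, a)]^2$. Lemma \ref{l3.2} then identifies $T_{0, R_K}$ as $T_K$.

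The alternative formulas \eqref{3.29}, \eqref{3.30} are verified without any change-of-basis manipulation. The definitions \eqref{2.25}, \eqref{2.26} together with the normalization \eqref{2.23} immediately yield the canonical values $\wti u_a(0, a) = 0$, $\wti u_a^{\, \prime}(0, a) = 1$, $\wti{\hatt u}_a(0, a) = 1$, $\wti{\hatt u}_a^{\, \prime}(0, a) = 0$, so $\{u_a(0, \dott), \hatt u_a(0, \dott)\}$ is a basis of $\ker(T_{max})$. Inspection then shows that the matrix in \eqref{3.29} is precisely the one sending the generalized boundary-value vectors at $a$ of these two basis elements to the corresponding vectors at $b$; formula \eqref{3.30} is verified identically at $b$ using $W(\hatt u_b(0,\dott), u_b(0,\dott))(a) = 1$. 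The principal anticipated obstacle is the sign bookkeeping in the $\det R_K = 1$ verification, where the Wronskian identity \eqref{2.29} must be aligned carefully with the solution-Wronskian constancy; once the signs are pinned down the remainder is routine linear algebra.
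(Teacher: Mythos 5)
Your proposal is correct and follows essentially the same route as the paper: Lemma \ref{l3.2} reduces everything to checking that $\ker(T_{max})$ lies in the domain of the candidate extension, the nonvanishing of $\wti u_b(0,a)$ (resp.\ $\wti u_1^{\, \prime}(0,a)$) is obtained from $T_F \geq \varepsilon I$ (resp.\ from \eqref{2.29} and linear independence), and $\det(R_K)=1$ follows from constancy of the Wronskian exactly as in the paper's \eqref{3.34}. The only (harmless) deviation is in \eqref{3.29}--\eqref{3.30}, where you verify the matrices directly on the bases $\{u_a(0,\dott), \hatt u_a(0,\dott)\}$ and $\{u_b(0,\dott), \hatt u_b(0,\dott)\}$ via their canonical generalized boundary values, while the paper instead performs an explicit change of basis to one satisfying \eqref{3.26}; both computations are routine and equivalent.
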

%%%%%%
\begin{proof} $(i)$ It suffices to prove \eqref{3.24}, the proof for \eqref{3.25} being entirely analogous. 
Since $\ker(T_{max})$ is one-dimensional, and $\tau$ is in the limit point case at $b$, one concludes that only the smallest solution of $\tau u = 0$ near $b$, that is, the principal solution, $u_b(0,\dott)$, can lie in $\ker(T_{max})$,
\begin{equation}
\ker(T_{max}) = \linspan \{u_b(0,\dott)\}. 
\end{equation}
One also notes that by \eqref{2.25}, \eqref{2.26}, 
\begin{equation}
 \wti u_b(0,b) = 0, \quad \wti u_b^{\, \prime}(0,b) = 1.
\end{equation} 
Since the Krein extension is now necessarily associated with separated boundary conditions, in fact, a self-adjoint boundary condition at the endpoint $a$ only, \eqref{3.8} implies that $u_b(0,\dott)$ must necessarily satisfy the boundary condition at $a$ which defines the Krein extension of $T_{min}$ in this case. Thus, the underlying boundary condition parameter $\ga_K \in [0,\pi)$ is determined via 
\begin{equation}
\sin(\ga_K) \wti u_b^{\, \prime}(0,a) + \cos(\ga_K)  \wti u_b(0,a) = 0. 
\end{equation}
If $\wti u_b(0,a) = 0$, then together with \eqref{2.28} and \eqref{3.4a} this would imply that 
$u_b(0,\dott) \in \dom(T_F = T_{0})$ in the notation of \eqref{2.27} (one notes that no boundary condition is 
required at $b$ due to the limit point property of $\tau$ at $b$). Thus, $0 \in \sigma_p(T_F)$, contradicting 
$T_F \geq \varepsilon I$. Hence 
$\ga_K > 0$, completing the proof of \eqref{3.24}. \\[1mm] 
$(ii)$ Since the limit circle case is assumed at the endpoints $a$ and $b$, all solutions $u(0,\dott)$ of $\tau u =0$ satisfy 
$u(0,\dott) \in \dom(T_{max})$ and hence the generalized boundary values \eqref{2.25}, \eqref{2.26} with 
$g = u(0,\dott)$ are well-defined. Introducing a basis for the null space of $T_{max}$ as in \eqref{3.26}, one notes via \eqref{2.29} that 
\begin{align}
W(u_1(0,\dott), u_2(0,\dott)) = - {\wti u_1}^{\, \prime}(0,a) = {\wti u_2}^{\, \prime}(0,b).    \lb{3.34} 
\end{align}
Next, introducing $R_K$ as in \eqref{3.28} and employing \eqref{3.26} and \eqref{3.34} one computes that 
\begin{align}
\det(R_K) = - \wti u_2^{\, \prime}(0,b)/\wti u_1^{\, \prime}(0,a) = 1, \, \text{ that is, } \,  R_K \in SL(2,\bbR).
\end{align}
Thus, \eqref{3.27} represents one of the self-adjoint extensions of $T_{min}$ characterized by $\varphi = 0$ and 
$R=R_K$ according to Theorem \ref{t2.12}\,$(iii)$, and hence it remains to show that this extension is precisely the 
Krein--von Neumann extension of $T_{min}$. 

For this purpose we turn to Lemma \ref{l3.2} next: Since 
\begin{equation} 
\dom(T_{0,R_K}) = \dom(T_{min}) \dotplus \ker(T_{min}^*) = \dom(T_{min}) \dotplus \ker(T_{max}),
\end{equation}   
and since $T_{min}$ is characterized by the vanishing of all generalized boundary values as depicted in \eqref{2.27b}, 
it suffices to verify that $u_1(0,\dott)$ and $u_2(0,\dott)$ both satisfy the boundary conditions in \eqref{3.27}, given \eqref{3.28} (see also Remark \ref{r3.3}\,$(iii)$ with $m=1$). This verification reduces to the following elementary computations, employing \eqref{3.26} and \eqref{3.34} once more,  
\begin{align}
\begin{split}
& R_K \begin{pmatrix} \wti u_1(0,a) \\ {\wti u_1}^{\, \prime}(0,a) \end{pmatrix} 
= \begin{pmatrix} 1 \\ {\wti u_1}^{\, \prime}(0,b) \end{pmatrix}  
= \begin{pmatrix} \wti u_1(0,b) \\ {\wti u_1}^{\, \prime}(0,b) \end{pmatrix}, \\   
& R_K \begin{pmatrix} \wti u_2(0,a) \\ {\wti u_2}^{\, \prime}(0,a) \end{pmatrix} 
= \begin{pmatrix} 0 \\ {\wti u_2}^{\, \prime}(0,b) \end{pmatrix}  
= \begin{pmatrix} \wti u_2(0,b) \\ {\wti u_2}^{\, \prime}(0,b) \end{pmatrix}, 
\end{split} 
\end{align}
completing the proof of \eqref{3.27}, \eqref{3.28}. 

Next, choosing
\begin{align}
\begin{split}
u_{1}(0,x)&=u_{a}(0, x)/\wti u_{a}(0, b),\\
u_{2}(0,x)&=\hatt u_{a}(0, x)-[\wti{\hatt u}_{a}(0,b)/\wti u_{a}(0,b)]u_{a}(0, x),
\end{split}
\end{align}
noting that $\wti u_a(0,b)\neq0$, one easily verifies that \eqref{3.26} is satisfied since
\begin{align}
\begin{split}
\wti u_{1}(0,a)&=\wti u_{a}(0, a)/\wti u_{a}(0, b)=0,\\
\wti u_{1}(0,b)&=\wti u_{a}(0, b)/\wti u_{a}(0, b)=1,\\
\wti u_{2}(0,a)&=\wti{\hatt u}_{a}(0, a)=1,\\
\wti u_{2}(0,b)&=\wti{\hatt u}_{a}(0, b)-\wti{\hatt u}_{a}(0, b)=0,
\end{split}
\end{align}
proving \eqref{3.29}. Expression \eqref{3.30} is proved analogously by using the principal and nonprincipal solutions near $x=b$ and choosing
\begin{align}
\begin{split}
u_{1}(0,x)&=\hatt u_{b}(0, x)-[\wti{\hatt u}_{b}(0,a)/\wti u_{b}(0,a)]u_{b}(0, x),\\
u_{2}(0,x)&=u_{b}(0, x)/\wti u_{b}(0, a).
\end{split}
\end{align}
\end{proof}
%%%%%%

Relations \eqref{3.27}, \eqref{3.28} extend \cite[Example~3.3]{CGNZ14} in the regular context to the singular one.

%%%%%%%%%%%%%%%%%%%%%%%%%%%%%%
%%%%%%%%%%%%%%%%%%%%%%%%%%%%%%
\section{Three Examples} \lb{s4}
%%%%%%%%%%%%%%%%%%%%%%%%%%%%%%
%%%%%%%%%%%%%%%%%%%%%%%%%%%%%%

In this section we illustrate Theorem \ref{t3.5} with three examples, including a generalized Bessel and Jacobi-type operators.

We start with the generalized Bessel operator following the analysis in \cite[Section 6]{GNS20}.

%%%%%%
\begin{example}[A Generalized Bessel Operator] \lb{e4.1} 
Let $a=0$ and $b\in(0,\infty)$ in \eqref{2.1}, and consider the concrete example 
\begin{equation}
\begin{split} \lb{4.1}
p(x)=x^\b ,\quad r(x)=x^\a, \quad q (x) = \frac{(2+\a-\b)^2\g^2-(1-\b)^2}{4}x^{\b-2}, \\
\a>-1,\ \b<1,\ \g\geq0,\ x\in(0,b).     
\end{split}
\end{equation}
Then 
\begin{align}
\begin{split}
\tau_{\a,\b,\g} = x^{-\a}\left[-\frac{d}{dx}x^\b\frac{d}{dx} +\frac{(2+\a-\b)^2\g^2-(1-\b)^2}{4}x^{\b-2}\right],\\
\a>-1,\ \b<1,\ \g\geq0,\ x\in(0,b),     \lb{4.2} 
\end{split}
\end{align}
is singular at the endpoint $0$ $($since the potential, $q$ is not integrable near $x=0$$)$ and is regular at $x=b$. Furthermore, $\tau_{\a,\b,\g}$ is in the limit circle case at $x=0$ if $0\leq\g<1$ and in the limit point case at $x=0$ when $\g\geq1$. 

Solutions of $\tau_{\a,\b,\g} u = z u$ are given by $($cf.\ \cite[No.~2.162, p.~440]{Ka61}$)$
\begin{align}
y_{1,\a,\b,\g}(z,x)&=x^{(1-\b)/2} J_{\gamma}\big(2z^{1/2} x^{(2+\a-\b)/2}/(2+\a-\b)\big),\quad \g\geq0,   \no \\
y_{2,\a,\b,\g}(z,x)&=\begin{cases}
x^{(1-\b)/2} J_{-\gamma}\big(2z^{1/2} x^{(2+\a-\b)/2}/(2+\a-\b)\big), & \g\notin\bbN_0,\\
x^{(1-\b)/2} Y_{\g}\big(2z^{1/2} x^{(2+\a-\b)/2}/(2+\a-\b)\big), & \g\in\bbN_0,
\end{cases}\ \g\geq0,
\end{align}
where $J_{\nu}(\dott), Y_{\nu}(\dott)$ are the standard Bessel functions of order $\nu \in \bbR$ 
$($cf.\ \cite[Ch.~9]{AS72}$)$.

In the following we assume that 
\begin{equation}
\gamma \in [0,1) 
\end{equation}
to ensure the limit circle case at $x=0$. In this case it suffices to focus on  the generalized boundary values at the singular endpoint $x = 0$. For this purpose we introduce principal and nonprincipal solutions $u_{0,\a,\b,\gamma}(0, \dott)$ and $\hatt u_{0,\a,\b,\gamma}(0, \dott)$ of $\tau_{\alpha,\beta,\gamma} u = 0$ at $x=0$ by
\begin{align}
\begin{split} 
u_{0,\a,\b,\gamma}(0, x) &= (1-\b)^{-1}x^{[1-\b+(2+\a-\b)\g]/2}, \quad \gamma \in [0,1),   \\
\hatt u_{0,\a,\b,\gamma}(0, x) &= \begin{cases} (1-\b)[(2+\a-\b) \gamma]^{-1} x^{[1-\b-(2+\a-\b)\g]/2}, & \gamma \in (0,1),     \lb{4.6} \\
(1-\b)x^{(1-\b)/2} \ln(1/x), & \gamma =0,  \end{cases}\\
&\hspace*{4.5cm} \a>-1,\; \b<1,\; x \in (0,1).
\end{split} 
\end{align} 
The generalized boundary values for $g \in \dom(T_{max,\alpha, \beta, \gamma})$ at $x=0$ are then of the form
\begin{align}
\wti g(0) &= - W(u_{0,\alpha,\beta,\gamma}(0, \dott), g)(0)     \no \\
&= \begin{cases} \lim_{x \downarrow 0} g(x)\big/\big[(1-\b)[(2+\a-\b) \gamma]^{-1} x^{[1-\b-(2+\a-\b)\g]/2}\big], & 
\gamma \in (0,1), \\[1mm]
\lim_{x \downarrow 0} g(x)\big/\big[(1-\b)x^{(1-\b)/2} \ln(1/x)\big], & \gamma =0, 
\end{cases}  \\
\wti g^{\, \prime} (0) &= W(\hatt u_{0,\alpha,\beta,\gamma}(0, \dott), g)(0)   \no \\
&= \begin{cases} \lim_{x \downarrow 0} \big[g(x) - \wti g(0) (1-\b)[(2+\a-\b) \gamma]^{-1} x^{[1-\b-(2+\a-\b)\g]/2}\big]\\
\qquad\quad \times \big[(1-\b)^{-1}x^{[1-\b+(2+\a-\b)\g]/2}\big]^{-1}, 
& \hspace{-.2cm}\gamma \in (0,1), \\[1mm]
\lim_{x \downarrow 0} \big[g(x) - \wti g(0) (1-\b)x^{(1-\b)/2} \ln(1/x)\big]\\
\qquad\quad \times \big[(1-\b)^{-1}x^{(1-\b)/2}\big]^{-1}, & \hspace{-.2cm} \gamma =0.
\end{cases}
\end{align}
Choosing 
\begin{align}
u_1(0,x)&=u_{0,\a,\b,\gamma}(0, x)/u_{0,\a,\b,\gamma}(0, b),\hspace*{4.6cm} \g\in[0,1), \no \\
u_2(0,x)&=\begin{cases}
\hatt u_{0,\a,\b,\gamma}(0, x)-(1-\b)^2[(2+\a-\b) \gamma]^{-1} b^{-(2+\a-\b)\g}u_{0,\a,\b,\gamma}(0, x), \\
\hfill \g\in(0,1),\\[1mm]
\hatt u_{0,\a,\b,0}(0, x)-(1-\b)^2\ln(1/b)u_{0,\a,\b,0}(0, x),\hfill \g=0,
\end{cases} \no \\
&\hspace*{6cm} \a>-1,\; \b<1,\; x \in (0,b),
\end{align}
in \eqref{3.26}--\eqref{3.28} yields the Krein--von Neumann extension $T_{0,R_K,\a,\b,\g}$ of $T_{min,\a,\b,\g}$ in the form 
\begin{align}
& T_{0,R_K,\a,\b,\g} f = \tau_{\a,\b,\g,} f,    \\
& f \in \dom(T_{0,R_K,\a,\b,\g})=\bigg\{g\in\dom(T_{max,\a,\b,\g}) \, \bigg| \begin{pmatrix} \wti g(b) 
\\ {\wti g}^{\, \prime}(b) \end{pmatrix} = R_{K,\a,\b,\g} \begin{pmatrix}
\wti g(0) \\ {\wti g}^{\, \prime}(0) \end{pmatrix} \bigg\},     \no
\end{align}
where 
\begin{align}
R_{K,\a,\b,\g}&=\begin{cases}
b^{[\b-1-(2+\a-\b)\g]/2}\\
\quad \times\begin{pmatrix}  \dfrac{1-\b}{(2+\a-\b)\g}b^{1-\b} & \dfrac{1}{1-\b}b^{1-\b+(2+\a-\b)\g} \\
\dfrac{(1-\b)^2}{2(2+\a-\b)\g}-\dfrac{1-\b}{2} & \left[\dfrac{1}{2}+\dfrac{(2+\a-\b)\g}{2(1-\b)}\right] b^{(2+\a-\b)\g}
\end{pmatrix},     \\
\hfill \g\in(0,1), \\[1mm]
\begin{pmatrix}  (1-\b)\ln(1/b)b^{(1-\b)/2} & \dfrac{1}{1-\b}b^{(1-\b)/2} \\
\dfrac{(1-\b)^2\ln(1/b)-2(1-\b)}{2}b^{(\b-1)/2} & \dfrac{1}{2}b^{(\b-1)/2}
\end{pmatrix},
\hfill \g=0.
\end{cases}
\end{align}
One verifies that $det(R_{K,\a,\b,\g}) =1$. 
\end{example}
%%%%%%%

For the Krein extension of the standard Bessel operator on the half-line $(0,\infty)$ (i.e., $\alpha=\beta=0$, $a=0$, $b = \infty$) we also refer to \cite{BDG11} (see also \cite{AB16}, \cite{AB15}). 

\medskip

Next, we turn to a singular operator relevant to the phenomenon of acoustic black holes, following \cite{BHN21}.

%%%%%%%
\begin{example}[Acoustic Black Hole]\lb{ex4.2}
Let $a=0$ and $b\in(0,\infty)$ in \eqref{2.1} and consider
\begin{equation}\lb{4.12}
p(x)=p_0(x)x^{\alpha},\quad r(x)=r_0(x)x^{\beta},\quad q(x)=0,\quad x\in (0,b),
\end{equation}
where $\alpha,\beta\in \bbR$ are fixed and $p_0$ and $r_0$ are continuous real-valued functions on $(0,b)$ that satisfy 
for some $m,M\in (0,\infty)$, 
\begin{equation}\lb{4.13}
m \leq p_0(x)\leq M,\quad m\leq r_0(x)\leq M,\quad x\in (0,b). 
\end{equation}
Then
\begin{equation}\lb{4.14}
\tau_{p_0,r_0,\alpha,\beta} = -\frac{1}{r_0(x)x^{\beta}}\frac{d}{dx}p_0(x)x^{\alpha}\frac{d}{dx},\quad\alpha,\beta\in \bbR,\; x\in (0,b).
\end{equation}
Linearly independent solutions $y_j(0,\dott)$, $j\in \{1,2\}$, of the differential equation $\tau_{p_0,r_0,\alpha,\beta}y=0$ are given by
\begin{equation}\lb{4.15}
y_{1,p_0,r_0,\alpha,\beta}(0,x)=1,\quad 
y_{2,p_0,r_0,\alpha,\beta}(0,x)=\int_x^b\frac{dt}{p_0(t)t^{\alpha}},\quad \alpha,\beta\in \bbR,\; x\in (0,b),
\end{equation}
and they satisfy
\begin{equation}\lb{4.16}
W(y_{2,p_0,r_0,\alpha,\beta}(0,\dott),y_{1,p_0,r_0,\alpha,\beta}(0,\dott))=1.
\end{equation}
Explicit calculations reveal that
\begin{equation}\lb{4.17}
y_{1,p_0,r_0,\alpha,\beta}(0,\dott),y_{2,p_0,r_0,\alpha,\beta}(0,\dott)\in L^2\big((0,b);r_0(x)x^{\beta}dx\big),
\end{equation}
that is, $\tau_{p_0,r_0,\alpha,\beta}$ is in the limit circle case at $x=0$, if and only if $\beta>\max\{-1,2\alpha-3\}$.
In addition, $\tau_{p_0,r_0,\alpha,\beta}$ is regular at $x=b$.  
To avoid the scenario where $\tau_{p_0,r_0,\alpha,\beta}$ is in the limit point case at $x=0$, we thus assume that 
$\alpha$ and $\beta$ satisfy $\beta>\max\{-1,2\alpha-3\}$. To avoid that $\tau_{p_0,r_0,\alpha,\beta}$ is also regular 
at $x=0$ $($and hence regular on $(0,b)$$)$, we now also assume that $\alpha\geq 1$. $($There is no need to discuss the Krein--von Neumann extensions in the regular case as that can be found in \cite[Example~3.3]{CGNZ14}$)$. Altogether, this means we are assuming  
\begin{equation}
\alpha\geq 1 \, \text{ and } \, \beta> 2\alpha-3. 
\end{equation}
Under the assumption $\alpha\geq 1$, $y_1(0,\dott)$ and $y_2(0,\dott)$ are principal and nonprincipal solutions, respectively, of $\tau_{p_0,r_0,\alpha,\beta}u=0$ at $x=0$.  Thus, in accordance with Theorem \ref{t2.9}, one chooses
\begin{align}\lb{4.19}
\begin{split} 
u_{0,p_0,r_0,\alpha,\beta}(0,x)=1, \quad 
\widehat{u}_{0,p_0,r_0,\alpha,\beta}(0,x)=\int_x^b\frac{dt}{p_0(t)t^{\alpha}},&   \\
 \alpha\geq1,\; \beta>2\a-3,\;  x\in (0,b).& 
\end{split} 
\end{align}
The generalized boundary values for $g\in \dom(T_{max,p_0,r_0,\alpha,\beta})$ are then of the form
\begin{align}
\wti g(0)&=\lim_{x\downarrow 0}g(x)\bigg[\int_x^b\frac{dt}{p_0(t)t^{\alpha}}\bigg]^{-1},\lb{4.20}\\
\wti g^{\, \prime}(0)&=\lim_{x\downarrow 0}\bigg[g(x)-\wti g(0) \int_x^b\frac{dt}{p_0(t)t^{\alpha}}\bigg],\lb{4.21}\\
\wti g(b)&= \lim_{x\uparrow b}g(x) = g(b),\lb{4.22}\\
\wti g^{\, \prime}(b)&=\lim_{x\uparrow b}[pg'](x)=[pg'](b)=p_0(b)b^{\alpha}g'(b).\lb{4.23}
\end{align}
A basis for $\ker(T_{max,p_0,r_0,\alpha,\beta})$ which satisfies \eqref{3.26} is given by
\begin{align}\lb{4.24}
\begin{split} 
u_{1,p_0,r_0,\alpha,\beta} (0,x)=1,\quad 
u_{2,p_0,r_0,\alpha,\beta} (0,x)=\int_x^b\frac{dt}{p_0(t)t^{\alpha}},& \\
\alpha\geq1,\; \beta>2\a-3,\;   x\in (0,b),&
\end{split} 
\end{align}
and explicit calculations using \eqref{4.19}, \eqref{4.21}, and \eqref{4.23} reveal
\begin{align}\lb{4.25}
\begin{split} 
& \wti u_{1,p_0,r_0,\alpha,\beta}^{\, \prime}(0,0) = 1,\quad \wti u_{2,p_0,r_0,\alpha,\beta}^{\, \prime}(0,0)=0,   \\
& \wti u_{1,p_0,r_0,\alpha,\beta}^{\, \prime}(0,b) = 0,\quad \, \wti u_{2,p_0,r_0,\alpha,\beta}^{\, \prime} (0,b)=-1.
\end{split} 
\end{align}
Using \eqref{4.25} in \eqref{3.28} then yields 
\begin{equation}\lb{4.26}
R_{K,p_0,r_0,\alpha,\beta} =
\begin{pmatrix}
0 & 1\\
-1 & 0
\end{pmatrix}, \quad \alpha\geq1,\; \beta>2\a-3,  
\end{equation} 
and hence the Krein--von Neumann extension $T_{0,R_K,p_0,r_0,\alpha,\beta}$ of $T_{min,p_0,r_0,\alpha,\beta}$ is characterized by
\begin{align}
&T_{0,R_K,p_0,r_0,\alpha,\beta} f = \tau_{p_0,r_0,\alpha,\beta}f,\lb{4.27}\\
&\dom(T_{0,R_K,p_0,r_0,\alpha,\beta}) = \big\{g\in \dom(T_{max,p_0,r_0,\alpha,\beta})\,\big|\, 
g(b) = \wti g^{\, \prime}(0),\, g^{[1]}(b) = -\wti g(0)\big\}.      \no
\end{align}
\end{example}
%%%%%%% 

\medskip

Finally, we turn to the Jacobi operator referring to \cite{GLPS20} for a much more detailed analysis.

%%%%%%%
\begin{example}[Jacobi Operator] \lb{e4.3} 
Let \begin{align}
& a=-1, \quad b = 1,    \no \\
& p(x) = p_{\a,\b}(x) = (1-x)^{\a+1}(1+x)^{\b+1}, \quad q (x) = q_{\a,\b}(x) = 0,  \lb{4.29} \\ 
& r(x) = r_{\a,\b}(x) = (1-x)^\a (1+x)^\b, \quad x\in(-1,1),\quad \a,\b\in\bbR     \no 
\end{align}
$($see, e.g., \cite[Ch.~22]{AS72}, \cite{BFL20}, \cite[Sect.~23]{Ev05}, \cite{EKLWY07}, \cite{Fr20}, \cite{Gr74}, 
\cite{KKT18}, \cite{KMO05}, \cite[Ch.~18]{Ov20}, 
\cite[Ch.~IV]{Sz75}$)$ and 
consider the Jacobi differential expression 
\begin{align} \lb{4.28}
\begin{split} 
\tau_{\a,\b} = - (1-x)^{-\a} (1+x)^{-\b}(d/dx) \big((1-x)^{\a+1}(1+x)^{\b+1}\big) (d/dx),&     \\ 
x \in (-1,1), \; \a, \b \in \bbR.  
\end{split} 
\end{align}
To decide the limit point/limit circle classification of $\tau_{\a,\b}$ at the interval endpoints $\pm 1$, it suffices 
to note that if $y_1$ is a given solution of $\tau y = 0$, then a 2nd linearly independent solution $y_2$ of 
$\tau y = 0$ is obtained via the standard formula
\begin{equation}
y_2(x) = y_1(x) \int_c^x dx' \, p(x')^{-1} y_1(x')^{-2}, \quad c, x \in (a,b).    \lb{4.30} 
\end{equation}
Returning to the concrete Jacobi case at hand, one notices that 
\begin{align} 
\begin{split} 
& y_1(x) = 1, \quad x \in (-1,1),  \\
& y_2(x) = \int_0^x dx' \, (1 - x')^{-1-\a} (1+x')^{-1-\b},  \quad x \in (-1,1),      \lb{4.31} 
\end{split}
\end{align}
and hence 
\begin{align} 
&y_2(x)    \\
& \quad = \begin{cases}
2^{-1-\a} \b^{-1} (1+ x)^{-\b} [1+\Oh(1+ x)] + \Oh(1), & \a \in \bbR, \, \b \in \bbR\backslash\{0\}, \; \text{as $x \downarrow -1$}, \\
- 2^{-1-\a} \ln(1+ x) + \Oh(1), & \a \in \bbR, \, \b = 0, \; \text{as $x \downarrow -1$},  \\
2^{-1-\b} \a^{-1} (1 - x)^{-\a} [1+\Oh(1- x)] + \Oh(1), & \a \in \bbR\backslash\{0\}, \, \b \in \bbR, \; \text{as $x \uparrow +1$}, \\
- 2^{-1-\b} \ln(1 - x) + \Oh(1), & \a =0, \, \b \in \bbR, \; \text{as $x \uparrow +1$}.   
\end{cases}    \no 
\end{align} 
Thus, an application of Theorem \ref{t2.3}, Definition \ref{d2.4}, and Remark \ref{r2.7}\,$(ii)$ implies the classification,
\begin{equation}
\tau_{\a,\b} \, \text{ is } \begin{cases} \text{regular at $-1$ if and only if $\a \in \bbR$, $\b \in (-1,0)$,} \\
\text{in the limit circle case at $-1$ if and only if $\a \in \bbR$, $\b \in [0,1)$,} \\
\text{in the limit point case at $-1$ if and only if $\a \in \bbR$, $\b \in \bbR \backslash (-1,1)$,} \\
\text{regular at $+1$ if and only if $\a \in (-1,0)$, $\b \in \bbR$,} \\
\text{in the limit circle case at $+1$ if and only if $\a \in [0,1)$, $\b \in \bbR$,} \\
\text{in the limit point case at $+1$ if and only if $\a \in \bbR \backslash (-1,1)$, $\b \in \bbR$.}
\end{cases}     \lb{4.32}
\end{equation} 
The fact \eqref{4.31} naturally leads to principal and nonprincipal solutions 
$u_{\pm1,\a,\b}(0,x)$ and $\hatt u_{\pm1,\a,\b}(0,x)$ of $\tau_{\a,\b}y=0$ near $\pm1$ as follows$:$ 
\begin{align} 
\begin{split} 
u_{-1,\a,\b}(0,x)&=\begin{cases}
-2^{-\a-1}\b^{-1} (1+x)^{-\b}[1+\Oh(1+x)], & \b\in(-\infty,0),\\
1, & \b\in[0,\infty),
\end{cases}     \\
\hatt u_{-1,\a,\b}(0,x)&=\begin{cases}
1, & \b\in(-\infty,0),\\
-2^{-\a-1}\ln((1+x)/2), & \b=0,\\
2^{-\a-1} \b^{-1} (1+x)^{-\b} [1+\Oh(1+x)], & \b\in(0,\infty),
\end{cases}
\end{split} 
\quad \a\in\bbR,     \lb{4.35} 
\end{align}
and
\begin{align}
\begin{split} 
u_{+1,\a,\b}(0,x)&=\begin{cases}
2^{-\b-1} \a^{-1} (1-x)^{-\a} [1+\Oh(1-x)], & \a\in(-\infty,0),\\
1, & \a\in[0,\infty),
\end{cases}      \\
\hatt u_{+1,\a,\b}(0,x)&=\begin{cases}
1, & \a\in(-\infty,0),\\
2^{-\b-1}\ln((1-x)/2), & \a=0,\\
-2^{-\b-1} \a^{-1} (1-x)^{-\a} [1+\Oh(1-x)], & \a\in(0,\infty),  
\end{cases}
\end{split} 
\quad \b\in\bbR.     \lb{4.36} 
\end{align} 
Combining the fact \eqref{4.32} with Theorem \ref{t2.5}, the preminimal operator $\dot T_{min,\a,\b}$ corresponding to $\tau_{\a,\b}$ is essentially self-adjoint in $L^2((-1,1); r_{\a,\b} dx)$ if and only if 
$\a, \b \in \bbR \backslash (-1,1)$. Thus, boundary values for the maximal operator $T_{max,\a,\b}$ associated with 
$\tau_{\a,\b}$ at $-1$ exist if and only if 
$\a \in \bbR$, $\b \in (-1,1)$, and similarly, boundary values for $T_{max,\a,\b}$ at $+1$ exist if and only if 
$\a \in (-1,1)$, $\b \in \bbR$. 

Employing the principal and nonprincipal solutions \eqref{4.35}, \eqref{4.36} at $\pm 1$, according to 
\eqref{2.25}, \eqref{2.26}, generalized boundary values for $g\in\dom(T_{max,\a,\b})$ are of the form
\begin{align}
\begin{split} 
\wti g(-1)&=\begin{cases}
g(-1), & \b\in(-1,0),\\
-2^{\a+1}\lim_{x\downarrow-1}g(x)/\ln((1+x)/2), & \b=0,\\
\b 2^{\a+1}\lim_{x\downarrow-1}(1+x)^\b g(x), & \b\in(0,1),
\end{cases}    \\
{\wti g}^{\, \prime}(-1)&=\begin{cases}
g^{[1]}(-1), &\b\in(-1,0),\\
\lim_{x\downarrow-1}\big[g(x)+\wti g(-1)2^{-\a-1}\ln((1+x)/2)\big], & \b=0,\\
\lim_{x\downarrow-1}\big[g(x)-\wti g(-1)2^{-\a-1}\b^{-1}(1+x)^{-\b}\big], & \b\in(0,1),
\end{cases}
\end{split} 
\quad \a\in\bbR,      \\
\begin{split}
\wti g(1)&=\begin{cases}
g(1), & \a\in(-1,0),\\
2^{\b+1}\lim_{x\uparrow1}g(x)/\ln((1-x)/2), & \a=0,\\
-\a2^{\b+1}\lim_{x\uparrow1}(1-x)^\a g(x), & \a\in(0,1),
\end{cases}\\
{\wti g}^{\, \prime}(1)&=\begin{cases}
g^{[1]}(1), & \a\in(-1,0),\\
\lim_{x\uparrow1}\big[g(x)-\wti g(1)2^{-\b-1}\ln((1-x)/2)\big], & \a=0,\\
\lim_{x\uparrow1}\big[g(x)+\wti g(1)2^{-\b-1}\a^{-1}(1-x)^{-\a}\big], & \a\in(0,1),
\end{cases}
\end{split}
\quad \b\in\bbR.
\end{align}
For a detailed treatment of solutions of the Jacobi differential equation and the associated hypergeometric differential equations we refer to \cite[Appendix A]{GLPS20}.

To shorten the presentation of this example and hence avoid case $(i)$ in Theorem \ref{t3.5} where $\tau_{\a,\b}$ is in the limit point case at $-1$ or $+1$ and in the limit circle case at the opposite endpoint $($i.e., the cases where 
$n_{\pm}(T_{min,\a,\b}) = 1$$)$, we now assume that 
$\tau_{\a,\b}$ is in the limit circle case at $\pm 1$ $($i.e., $n_{\pm}(T_{min,\a,\b}) = 2$ as in case $(ii)$ of Theorem \ref{t3.5}$)$. Thus, we 
assume that 
\begin{equation}
\a,\b\in(-1,1)
\end{equation}
in the following. Next, we consider two linearly independent solutions of $\tau_{\a,\b}y=0$ near $x=-1$ given by
\begin{align}
y_{1,\a,\b,-1}(0,x)&=1,     \no \\
y_{2,\a,\b,-1}(0,x)&= 
(1+x)^{-\b}F(1+\a,-\b;1-\b;(1+x)/2), \quad \b\in(-1,1)\backslash\{0\},      \no \\
&\hspace*{5cm} \a\in(-1,1),\; x\in(-1,1). 
\end{align}
Furthermore, using the connection formulas found in \cite[Eq. 15.3.6, 15.3.10]{AS72} yields the behavior of $y_{2,\a,\b,-1}(0,x)$ near $x=1$,
\begin{align}
y_{2,\a,\b,-1}(0,x)&=\begin{cases}
(1+x)^{-\b}\dfrac{\Gamma(1-\b)\Gamma(-\a)}{\Gamma(-\a-\b)}F(1+\a,-\b;1+\a;(1-x)/2)\\
\ -(1+x)^{-\b}(1-x)^{-\a}2^\a \a^{-1}\b F(-\a-\b,1;1-\a;(1-x)/2),\\
\hfill \a\in(-1,1)\backslash\{0\},\\[1mm]
-(1+x)^{-\b}\b\displaystyle\sum_{n=0}^\infty \dfrac{(-\b)_n}{2^n(n!)^2}[\psi(n+1)-\psi(n-\b)\\
\quad -\ln((1-x)/2)](1-x)^n,\hfill \a=0,
\end{cases} \no \\
&\hspace*{4.5cm} \b \in(-1,1)\backslash\{0\},\; x\in(-1,1).
\end{align}
Here $F(\dott,\dott;\dott;\dott)$ denotes the hypergeometric function $($see, e.g., \cite[Ch.~15]{AS72}$)$, $\psi(\dott) = \Gamma'(\dott)/\Gamma(\dott)$ the Digamma function, $\gamma_{E} = - \psi(1) = 0.57721\dots$ represents Euler's constant, and 
\begin{equation}
(\zeta)_0 =1, \quad (\zeta)_n = \Gamma(\zeta + n)/\Gamma(\zeta), \; n \in \bbN, 
\quad \zeta \in \bbC \backslash (-\bbN_0), 
\end{equation}
abbreviates Pochhammer's symbol $($see, e.g., \cite[Ch.~6]{AS72}$)$. 

Similarly, we consider linearly independent solutions of $\tau_{\a,\b}y=0$ near $x=1$,
\begin{align}
y_{1,\a,\b,1}(0,x)&=1,    \no \\
y_{2,\a,\b,1}(0,x)&=
(1-x)^{-\a}F(1+\b,-\a;1-\a;(1-x)/2), \quad  \a\in(-1,1)\backslash\{0\},     \no \\
&\hspace*{5cm} \b\in(-1,1),\; x\in(-1,1),
\end{align}
noting one can show that 
\begin{align}
y_{2,\a,\b,1}(0,x)&=\begin{cases}
(1-x)^{-\a}\dfrac{\Gamma(1-\a)\Gamma(-\b)}{\Gamma(-\a-\b)}F(1+\b,-\a;1+\b;(1+x)/2)\\
\ -(1-x)^{-\a}(1+x)^{-\b}2^\b \b^{-1}\a F(-\a-\b,1;1-\b;(1+x)/2),\\
\hfill \b\in(-1,1)\backslash\{0\},\\[1mm]
-(1-x)^{-\a}\a\displaystyle\sum_{n=0}^\infty \dfrac{(-\a)_n}{2^n(n!)^2}[\psi(n+1)-\psi(n-\a)\\
\quad -\ln((1+x)/2)](1+x)^n,\hfill \b=0,
\end{cases} \no \\
&\hspace*{4.7cm} \a \in(-1,1)\backslash\{0\},\; x\in(-1,1).
\end{align}

For $\a,\b\in(-1,1)$, the following five cases are associated with a strictly positive minimal operator 
$T_{min,\a,\b}$ $($see, \cite{GLPS20}$)$ and we now provide the corresponding choices $u_1,\ u_2$ in 
\eqref{3.26}--\eqref{3.28} that yield $R_{K,\a,\b}$ and the Krein--von Neumann extension $T_{0,R_K,\a,\b}$ of $T_{min,\a,\b}$,
\begin{align}
& T_{0,R_K,\a,\b} f = \tau_{\a,\b} f,    \\
& f \in \dom(T_{0,R_K,\a,\b})=\bigg\{g\in\dom(T_{max,\a,\b}) \, \bigg| \begin{pmatrix} \wti g(1) 
\\ {\wti g}^{\, \prime}(1) \end{pmatrix} = R_{K,\a,\b} \begin{pmatrix}
\wti g(-1) \\ {\wti g}^{\, \prime}(-1) \end{pmatrix} \bigg\}. \no
\end{align}

\smallskip

%%%%%%%%%%%%
\noindent{\bf (I) The regular case $\boldsymbol{\a,\b\in(-1,0)}$:} Choosing 
\begin{align}
u_1(0,x)=2^\b\dfrac{\Gamma(-\a-\b)}{\Gamma(-\a)\Gamma(1-\b)}y_{2,\a,\b,-1}(0,x) , \quad
u_2(0,x)=1-u_1(0,x),
\end{align}
yields
\begin{align}
R_{K,\a,\b}&=
\begin{pmatrix}  1 & 2^{-\a-\b-1}\dfrac{\Gamma(-\a)\Gamma(-\b)}{\Gamma(-\a-\b)} \\
0 & 1
\end{pmatrix},\quad \a,\b\in(-1,0).
\end{align}
%%%%%%%%%%%%

\smallskip

%%%%%%%%%%%%
\noindent{\bf (II) The case $\boldsymbol{\a\in(-1,0),\; \b\in(0,1)}$:} Choosing 
\begin{align}
u_1(0,x)=1 , \quad
u_2(0,x)=\b^{-1}2^{-\a-1}y_{2,\a,\b,-1}(0,x)+2^{-\a-\b-1}\dfrac{\Gamma(-\a)\Gamma(-\b)}{\Gamma(-\a-\b)},
\end{align}
yields
\begin{align}
R_{K,\a,\b}&=
\begin{pmatrix}  -2^{-\a-\b-1}\dfrac{\Gamma(-\a)\Gamma(-\b)}{\Gamma(-\a-\b)} & 1 \\
-1 & 0
\end{pmatrix},\quad \a\in(-1,0),\; \b\in(0,1).
\end{align}
%%%%%%%%%%%%

\smallskip

%%%%%%%%%%%%
\noindent{\bf (III) The case $\boldsymbol{\a\in(0,1),\; \b\in(-1,0)}$:} Choosing 
\begin{align}
u_1(0,x)=\b^{-1}2^{-\a-1}y_{2,\a,\b,-1}(0,x), \quad u_2(0,x)=1 ,
\end{align}
yields
\begin{align}
R_{K,\a,\b}&=
\begin{pmatrix}  0 & -1 \\
1 & 2^{-\a-\b-1}\dfrac{\Gamma(-\a)\Gamma(-\b)}{\Gamma(-\a-\b)}
\end{pmatrix},\quad \a\in(0,1),\; \b\in(-1,0).
\end{align}
%%%%%%%%%%%%
$($In cases $\mathbf{(II)}$ and $\mathbf{(III)}$ we interpret $1/\Gamma(0) = 0$.$)$ 
\smallskip

%%%%%%%%%%%%
\noindent{\bf (IV) The case $\boldsymbol{\a=0,\; \b\in(-1,0)}$:} Choosing 
\begin{align}
u_1(0,x)=\b^{-1}2^{-1}y_{2,0,\b,-1}(0,x), \quad u_2(0,x)=1,
\end{align}
yields
\begin{align}
R_{K,0,\b}&=
\begin{pmatrix}  0 & -1 \\
1 & -2^{-\b-1}[\gamma_{E}+\psi(-\b)]
\end{pmatrix},\quad \a=0,\; \b\in(-1,0).
\end{align}
%%%%%%%%%%%%

\smallskip

%%%%%%%%%%%%
\noindent{\bf (V) The case $\boldsymbol{\a\in(-1,0),\; \b=0}$:} Choosing 
\begin{align}
u_1(0,x)=1, \quad u_2(0,x)=-\a^{-1} 2^{-1}y_{2,\a,0,1}(0,x),
\end{align}
yields
\begin{align}
R_{K,\a,0}&=
\begin{pmatrix}  2^{-\a-1}[\gamma_{E}+\psi(-\a)] & 1 \\
-1 & 0
\end{pmatrix},\quad \a\in(-1,0),\; \b=0.
\end{align}
%%%%%%%%%%%%

\noindent 
Obviously, $det(R_{K,\a,\b}) = 1$ in all five cases. 
\end{example}
%%%%%%%

%%%%%%%
\begin{remark} \lb{r4.4}
In the remaining four cases in Example \ref{e4.3}, given by all combinations of $\a=0,\ \b=0,\ \a\in(0,1)$, and $\b\in(0,1)$, one observes that Theorem \ref{t3.5} is not applicable as the underlying minimal operator, $T_{min, \a,\b}$, is not strictly positive. This is easily seen by considering the Jacobi polynomials and the boundary conditions they satisfy. The $n$th Jacobi polynomial is defined as $($see \cite[Eq. 18.5.7]{OLBC10}$)$
\begin{align}
\begin{split}
    P_n^{\a, \b}(x) := \dfrac{(\a+1)_n}{n!} F(-n, \, n + \a + \b+1; \,
     \a + 1; \, (1-x)/2),
     \\
     n\in\N_0,\ -\a\notin\N,\ -n-\a-\b-1 \notin \N,
\end{split}
\end{align}
and can be defined by continuity for all parameters $\a, \b \in \R$. We note that $P_n^{\a,\b}(x)$ is a polynomial of 
degree at most $n$, and has strictly smaller degree if and only if $-n-\a-\b \in \lbrace 1, \dots , n \rbrace$ 
$($cf.~\cite[p.~64]{Sz75}$)$. It satisfies the differential equation
\begin{align}
    \tau_{\a,\b} P_n^{\a, \b}(x) = \l^{\a,\b}_n  P_n^{\a, \b}(x),
\end{align}
where 
\begin{align}
    \l^{\a,\b}_n = n(n+1+\a+\b), \quad n \in \bbN_0.
\end{align}
One verifies that the Jacobi polynomials are solutions of the Jacobi operator eigenvalue equation 
$\tau_{\a,\b}y=\l^{\a,\b}_n y$ with Neumann boundary conditions in the regular case where $\a,\b\in(-1,0)$, 
and the Friedrichs boundary conditions in the present case under consideration where $\a, \b \in [0,1)$.

In particular, this implies that $0 \in\sigma(T_{F,\a,\b})$, $\a,\b \in [0,1)$, where $T_{F,\a,\b}$ denotes the Friedrichs 
extension of $T_{min, \a,\b}$, and hence $T_{min, \a,\b} \geq 0$ is nonnegative, but not strictly positive when 
$\a,\b \in [0,1)$.  
\hfill $\diamond$
\end{remark}
%%%%%%%

%%%%%%%%%%%%%%%%%%%%%%%%%%%%%%

%%%%%%%%%%%%%%%%%%%%%%%%%%%%%%
%%%%%%%%%%%%%%%%%%%%%%%%%%%%%%
%\appendix

\medskip

%%%%%%%%%%%%%%%%%%%%%%%%%%%%%%%%%%%%%
\noindent 
{\bf Acknowledgments.} We gratefully acknowledge discussions with Jussi Behrndt. 
We are indebted to Boris Belinskiy for kindly organizing the special session, 
``Modern Applied Analysis'' at the AMS Sectional Meeting at the University of Tennessee at Chattanooga,    
October 10--11, 2020, and for organizing the associated special issue in Applicable Analysis. 
%%%%%%%%%%%%%%%%%%%%%%%%%%%%%%%%%%%%%

%%%%%%%%%%%%%%%%%%%%%%%%%%%%%%%%
%%%%%%%%%%%%%%%%%%%%%%%%%%%%%%%%
 
\end{document}